\newtheorem{theorem}{Theorem}[section]
\newtheorem{lemma}[theorem]{Lemma}
\newtheorem{problem}[theorem]{Problem}
\newtheorem{conjecture}[theorem]{Conjecture}
\begin{document}
\textwidth 150mm \textheight 225mm
\title{Spectral extremal problems for $(a,b,k)$-critical and fractional $(a,b,k)$-critical graphs
\thanks{Supported by the National Natural Science Foundation of China (Nos. 12271439 and 12001434)}}
\author{Zengzhao Xu$^{a,b}$, Ligong Wang$^{a,b}$\footnote{Corresponding author.}, Weige Xi$^{c}$\\
{\small $^{a}$ School of Mathematics and Statistics, Northwestern Polytechnical University,}\\
{\small  Xi'an, Shaanxi 710129, P.R. China.}\\
{\small $^b$ Xi'an-Budapest Joint Research Center for Combinatorics, Northwestern
	Polytechnical University,}\\
{\small $^{c}$ College of Science, Northwest A\&F University, Yangling, Shaanxi 712100, China}\\
{\small E-mail: xuzz0130@163.com; lgwangmath@163.com; xiyanxwg@163.com}\\}
\date{}
\maketitle
\begin{center}
\begin{minipage}{120mm}
\vskip 0.3cm
\begin{center}
{\small {\bf Abstract}}
\end{center}
{\small    A factor of a graph is essentially a specific type spanning subgraph. The study of characterizing the existence of $[a, b]$-factors based on eigenvalue conditions can be traced back to the work of Brouwer and Haemers (2005) on perfect matchings. With the advancement of graphs factor theory, the related spectral extremal problems, particularly the study of $[a,b]$-factors and fractional $[a,b]$-factors, have been widely studied by scholars. Our work is motivated by research related to the $[a,b]$-factors and fractional $[a,b]$-factors, and explores their generalizations: $(a,b,k)$-critical graphs and fractional $(a,b,k)$-critical graphs. A graph $G$ is called an $(a,b,k)$-critical (a fractional $(a,b,k)$-critical) graph if after deleting any $k$ vertices of $G$ the remaining graph of $G$ has an $[a,b]$-factor (a fractional $[a,b]$-factor). In this paper, we establish spectral radius conditions for a graph to be $(a,b,k)$-critical or fractional $(a,b,k)$-critical. When $k=0$, our results also resolve some open problems concerning $[a, b]$-factors and fractional $[a, b]$-factors.

\vskip 0.1in \noindent {\bf Key Words}: \  $[a,b]$-factor, Fractional $[a,b]$-factor, $(a,b,k)$-critical, Fractional $(a,b,k)$-critical, Spectral radius. \vskip
0.1in \noindent {\bf AMS Subject Classification (2020)}: \ 05C35, 05C50, 05C70}
\end{minipage}
\end{center}

\section{Introduction }

 Before delving into our research content, we will commence with a concise introduction. Our research investigates the existence of graph factors by employing the spectral radius. A factor of a graph is essentially a specific type spanning subgraph, and related research can be traced back to the pioneering work of the Danish mathematician Petersen in 1891. Studying graph factors not only reveals the structural properties of graphs, but also provides effective tools for addressing problems in matching theory, network design, and combinatorial optimization. The introduction section of our paper will be organized into three  primary parts.
 
 \subsection{Terminology and notation}
 
\quad Throughout this paper, all graphs under consideration are finite, undirected and simple. The vertex set and edge set of a graph $G$ are represented as $V(G)$ and  $E(G)$, respectively. The number of edges in $G$ is denoted by $e(G)$. For a vertex $v_i \in V(G)$, its neighborhood is defined as $N_G(v_i)$, and its degree is given by $d_G(v_i) = |N_G(v_i)|$ (or simply $d(v_i)$). In addition, let $N_G[v_i]=N_G(v_i)\cup \{v_i\}$. For a vertex $v_i \in V(G)$ and a vertex subset $S \subseteq V(G)$, let $N_S(v_i) = N_G(v_i) \cap S$ and $d_S(v_i) = |N_S(v_i)|$.
 For a vertex subset $W \subseteq V(G)$, the subgraphs of $G$ induced by $W$ and $G-W$ are denoted by $G[W]$ and $V(G)\setminus W$, respectively. We use $\delta(G)$ to denote the minimum degree of graph $G$.  
 For two disjoint vertex subsets  $S_1,S_2\subseteq V(G)$, let
  $E_G(S_1, S_2)$ represent the set of edges in $G$ with one endpoint in $S_1$ and the other in $S_2$, and let $e_G(S_1, S_2) = |E_G(S_1, S_2)|$.
  We use $G[S_1, S_2]$ to represent the bipartite graph with vertex set $S_1\cup S_2$ and edge set $E_G(S_1, S_2)$. Specifically, the complete bipartite graph with vertex set $S_1\cup S_2$ is denoted by $K_{|S_1|, |S_2|}$. Let $K_n$ denote the complete graph of order $n$. We denote the disjoint union of graphs $G_1$ and $G_2$ by $G_1 \cup G_2$. The join of $G_1$ and $G_2$, denoted by $ G_1 \vee G_2$, is obtained from $G_1 \cup G_2$ by joining every vertex of $ V(G_1)$ to every vertex of $V(G_2)$. For further details and related concepts, we refer readers to \cite{BM}.

For a graph $G$ with vertex set $V(G)$, the adjacency matrix of $G$, denoted by $A(G) = (a_{ij})_{n \times n}$, is a matrix where $a_{ij} = 1 $ if the vertices $v_i$ and $v_j$ are adjacent and $a_{ij}=0$ otherwise. Let $\lambda(G)$ denote the spectral radius of a graph $G$, which is the largest eigenvalue corresponding to the adjacency matrix $A(G)$.

\subsection{Background and results}

\quad As an important branch of graph theory, the study of graph factors is concerned with the decomposition of graphs into subgraphs of prescribed architectures. Factor theory not only provides a theoretical framework for problems such as matching theory and network design, but also holds significant theoretical importance for understanding graph structures. Furthermore, it plays an important role in applied disciplines like computer science, operations research, and chemical graph theory. The study of factor theory can be traced back to the pioneering work of the Danish mathematician Petersen in 1891, whose findings laid the theoretical foundation for early developments in this field. Petersen's Theorem, which originated from his work on a Diophantine equations problem, establishes that all even regular graphs are $2$-factorable. Although Petersen's Theorem originated from the study of problems outside graph theory, it opened up a new and promising research direction in the graph theory.

Depending on the attributes of the induced subgraphs, the general factor problem is divided into two classes: degree-constrained factors and component factors. A degree-constrained factor refers to a factor where the degree of every vertex does not exceed a given value, such as a perfect matching, $r$-factors, $[a, b]$-factors, and so on. For positive integers $b\ge a$, an $[a, b]$-factor of a graph $G$ is defined as a spanning subgraph $G_0$ such that $a\le d_{G_0}(v)\le b$ for every $v\in V(G)$.  If $a=b=r$, a $[r, r]$-factor is called a $r$-factor of $G$. In addition, a $1$-factor is also called a perfect matching. Let $h \colon E(G) \to [0, 1]$ be
a function defined on the edge set $E(G)$ and let $b\ge a$ be two positive integers. For every $v \in V(G)$, if $a \leq \sum_{e \in E_G(v)} h(e) \leq b$, then the spanning subgraph 
with edge set $E_h = \{e \in E(G) \mid h(e) > 0\}$, denoted by $G[E_h]$, is called a fractional $[a, b]$-factor of $G$ with indicator function $h$.

 In 2005, Brouwer and Haemers \cite{BH} characterized the condition for a regular graph to contain a perfect matching in terms of its third-largest eigenvalue. This work is one of the early publications that explored factor existence using eigenvalue methods. Subsequently, various improvements and extensions to this result have been made by researchers (\cite{C,CG,CGH,L2}). With the development of graph-theoretic methods, many scholars have investigated the existence of graph factors by  various parameters, such as eigenvalues (\cite{FLZ,FLLO,FL,FLL,FLA,G,HL,LFZ,MW,O,O2,TZ,WZ}), stability number (\cite{KL}), size (\cite{HL,HV,J}), toughness (\cite{CFL,EJKS}), independence number (\cite{N2}), degree and neighborhoods (\cite{FY,M1,M2}).

The research focus of this paper is on $(a,b,k)$-critical graphs and fractional $(a,b,k)$-critical graphs, both of which are related to $[a,b]$-factors and fractional $[a,b]$-factors. A graph $G$ is called an $(a,b,k)$-critical graph (a fractional $(a,b,k)$-critical) if after deleting any $k$ vertices of $G$ the remaining graph of $G$ has an $[a,b]$-factor (a fractional $[a,b]$-factor). If $k = 0$, the $(a,b,0)$-critical (fractional $(a,b,0)$-critical) graph has an $[a,b]$-factor (a fractional $[a,b]$-factor). Particularly, if we set $a=b=r$, the $(r,r,k)$-critical (fractional $(r,r,k)$-critical) graph $G$ is simply called a $(r,k)$-critical (fractional $(r,k)$-critical) graph. As natural generalizations of $[a,b]$-factors and fractional $[a,b]$-factors, the study of $(a,b,k)$-critical graphs and fractional $(a,b,k)$-critical graphs is of great importance for understanding of the factor theory of graphs.

With the advancement of graph factor theory, critical factors have attracted increasing scholarly attention. The characterization of factor-critical graphs in terms of some parameters has been a focus of research for many scholars. Zhou  \cite{Z} discussed the relationship between the binding number and $(a, b, k)$-critical graphs, and established a binding number condition for a graph to be $(a, b, k)$-critical. Gao et. al \cite{GWC2} determined the exact tight isolated toughness bound for fractional $(a, b, k)$-critical graphs. Li and Ma \cite{LM} presented two degree conditions for graphs to be fractional $(a, b, k)$-critical graphs. For further related work, readers are referred to \cite{GWC,GWW,YH2,Z2}.

The spectral radius reflects many key properties of the graph. Therefore, studying $(a,b,k)$-critical or fractional $(a,b,k)$-critical graph from the perspective of spectral radius is of significant importance.

Our work is motivated by research related to the $[a,b]$-factors and fractional $[a,b]$-factors, and explores their generalizations: $(a,b,k)$-critical graphs and fractional $(a,b,k)$-critical graphs. In 2021, Cho et al. \cite{CHOP} posed the spectral version conjecture for the existence of $[a, b]$-factors in graphs.

\begin{conjecture}(\cite{CHOP})\label{con1}
	Let $G$ be an $n$-vertex graph and $b\ge a$ be two positive integers, where $na\equiv 0 \pmod{2}$ and $n \geq a + 1$. If 
	$$\lambda(G) > \lambda(K_{a-1} \lor (K_1 \cup K_{n-a})),$$
	then $G$ contains an $[a,b]$-factor.
\end{conjecture}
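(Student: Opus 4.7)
The plan is to argue by contradiction. Suppose $G$ is an $n$-vertex graph with $\lambda(G) > \lambda(K_{a-1} \vee (K_1 \cup K_{n-a}))$ that does not contain an $[a,b]$-factor. I would first invoke the classical Lov\'asz-type characterization of graphs without $[a,b]$-factors, which produces disjoint vertex subsets $S, T \subseteq V(G)$ violating the standard barrier condition
$$b|S| + \sum_{x \in T} d_{G-S}(x) - a|T| \le -1,$$
with an additional component-count correction when $a=b$, in which case the parity hypothesis $na \equiv 0 \pmod{2}$ becomes essential.

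From this barrier inequality I would extract two structural facts: the vertices of $T$ must have small degree in $G-S$, and $t := |T|$ is forced to be large relative to $s := |S|$. Rewriting the barrier as $at \ge bs + 1 + \sum_{x \in T} d_{G-S}(x)$, I would conclude that many vertices of $V(G)\setminus S$ have degree at most $s + a - 1$ in $G$. This identifies an explicit spanning supergraph of $G$ of the form $H^*(s,T) := K_s \vee (H_1 \cup K_{n-s-|V(H_1)|})$, where $H_1$ is a graph on the deficient vertices with maximum degree at most $a-1$. Monotonicity of the spectral radius under edge addition yields $\lambda(G) \le \lambda(H^*(s,T))$, reducing the conjecture to a spectral comparison inside the family $\{H^*(s,T)\}$.

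The second phase is to maximize $\lambda(H^*(s,T))$ over the admissible parameters and show the maximum is attained at $s = a-1$ with $H_1 = K_1$, recovering $K_{a-1} \vee (K_1 \cup K_{n-a})$ exactly. Every such candidate admits an equitable partition with at most three or four cells, so its Perron root is the largest root of an explicit cubic or quartic polynomial. I would compare these polynomials via monotonicity arguments in $s$, showing that the Perron root decreases once $s$ exceeds $a-1$, and separately rule out configurations with $|V(H_1)| \ge 2$ by analogous explicit polynomial comparisons, often by exhibiting an edge-addition from one candidate into another to compare Perron roots directly through the Perron--Frobenius theorem. Chaining $\lambda(G) \le \lambda(H^*(s,T)) \le \lambda(K_{a-1} \vee (K_1 \cup K_{n-a}))$ then contradicts the hypothesis.

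The main technical obstacle is this final spectral optimization: because $\lambda$ is only implicitly defined as the Perron root of a cubic or quartic equation, showing monotonicity in $s$ and $|V(H_1)|$ requires careful manipulation of the characteristic polynomial, possibly via a continuity or perturbation argument passing through intermediate graphs, and the inequality becomes tight precisely at the conjectured extremal graph, which leaves little room for slack. Secondary, more routine difficulties are the treatment of small $n$, the boundary cases $s = 0$ and $s = n - 1$, and the parity-driven case $a = b$, where the component term $q(S,T)$ in the Lov\'asz barrier must be tracked and the condition $na \equiv 0 \pmod 2$ invoked to rule out odd-component obstructions.
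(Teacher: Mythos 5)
This statement is a quoted conjecture of Cho, Hyun, O and Park; the paper you are being compared against does not prove it, but only records that it was settled in full by Wei and Zhang \cite{WZ} (with the range $n\ge 3a+b-1$ done earlier by Fan, Lin and Lu \cite{FLL}). So there is no in-paper proof to match your proposal against; your outline does, however, follow essentially the same strategy as those external proofs: take a Lov\'asz-type barrier pair $(S,T)$, pass to a spanning supergraph determined by $(|S|,|T|)$ and the degree deficiency, and then run a spectral comparison against $K_{a-1}\vee(K_1\cup K_{n-a})$.

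The genuine gap is that the decisive step is announced rather than carried out. Everything up to ``$\lambda(G)\le\lambda(H^*(s,T))$'' is soft (monotonicity of the spectral radius under edge addition), and you yourself flag the optimization over the family $\{H^*(s,T)\}$ as the main obstacle; but that optimization \emph{is} the proof, and it is exactly where the problem resisted solution: for $n$ close to $a+1$ the candidate $K_{a-1}\vee(K_1\cup K_{n-a})$ wins only by a vanishing margin over competitors with larger $|S|$ or larger deficient sets, and the quotient-matrix root comparisons you describe are not routine there --- this is precisely why \cite{FLL} could only reach $n\ge 3a+b-1$ and the full range required the separate, more delicate analysis of \cite{WZ}. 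Dismissing small $n$ as a ``secondary, more routine difficulty'' therefore mislocates the hard part. A smaller but real inaccuracy: the barrier inequality bounds $\sum_{x\in T}d_{G-S}(x)$, i.e.\ the number of edges from $T$ into \emph{all} of $G-S$ (both within $T$ and into $W=V(G)\setminus(S\cup T)$), so the supergraph cannot be written as $K_s\vee(H_1\cup K_{n-s-|V(H_1)|})$ with $H_1$ supported only on the deficient vertices; the few edges between $T$ and the clique part must be tracked explicitly (compare the role of the $a-1$ extra edges in the extremal graphs $F_n^{a,b,k}$ of the present paper), and this bookkeeping is where the extremal configuration is actually identified.
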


Over the past few years, researchers have studied this conjecture and obtained some results. Fan et. al \cite{FLL} confirmed Conjecture \ref{con1} for $n\ge 3a+b-1.$ The full conjecture was confirmed by Wei and Zhang \cite{WZ}. Hao and Li \cite{HL} determined the largest spectral radius among all $n$-vertex graphs forbidding $[a,b]$-factors and characterized the extremal graphs, which strengthened the result of Wei and Zhang \cite{WZ}. In addition, Hao and Li \cite{HL} observed that $\delta(G)\ge a$ is a necessary condition for the existence of an $[a,b]$-factor in a graph, and they posed a question.

\begin{problem} (\cite{HL})\label{prob1}
	Determine sharp lower bounds on the size or spectral radius of an $n$-vertex graph $G$ with $\delta(G) \geq a$ such that $G$ contains an $[a, b]$-factor.
\end{problem}

Li et. al \cite{LFZ} characterized conditions for the existence of fractional $[a,b]$-factor in a graph from the perspectives of spectral radius and size, and ultimately proposed an open problem for further research.

\begin{problem} (\cite{LFZ})\label{prob2}
	Let $ k \geq 3$ and let $G$  be a connected graph of order $n$ with minimum degree $\delta \geq k$ containing no fractional $r$-factors. Then
	$$\lambda(G) \leq \lambda(F_{n}^{r}),$$
with equality if and only if $ G \cong F_{n}^{r}$, where $F_{n}^{r}$ is obtained from $K_{r} \vee \left( K_{n-2r-1} \cup (r + 1)K_1 \right)$ by adding $r-1$ edges between one vertex in $V((r + 1)K_1)$ and $r - 1$ vertices in $V(K_{n-2r-1})$. 
\end{problem}

Inspired by Problems \ref{prob1} and \ref{prob2}, a natural question arises:

\begin{problem}\label{prob3} 
	For an $n$-vertex graph G with $\delta(G)\ge a$, determine sharp lower bounds on the spectral radius that ensure the existence of a fractional $[a, b]$-factor.
\end{problem}

Recently, significant breakthroughs have been made in the study of these problems. Tang and Zhang \cite{TZ} addressed Problem \ref{prob1} for the case $a=b=r$ from a spectral perspective. Fan et. al \cite{FLZ} tackled the case of $b>a\ge 1$ for both Problem \ref{prob1} and Problem \ref{prob3}. Motivated by the above problems, \cite{FLZ} and \cite{TZ}, a natural question is as follows:

\begin{problem}\label{prob4} 
	For an $n$-vertex graph G, determine sharp lower bounds on the spectral radius for $G$ to be $(a,b,k)$-critical or fractional $(a,b,k)$-critical.
\end{problem}

The graph $F_{n}^{a,b,k}$ is constructed from
$K_{a+k} \vee \left( K_{n-(a+b+k+1)} \cup (b+1)K_1 \right)$ by adding $a-1$ edges between one vertex in $V((b+1)K_1)$ and $a-1$ vertices in $V(K_{n-(a+b+k+1)})$($F_{n}^{a,b,k}$ is shown in Figure 1). In fact, $F_{n}^{a,b,k}$ is not an $(a,b,k)$-critical graph. Let $S = V(K_{a+k})$ and $T = V((b + 1)K_1)$ in $F_{n}^{a,b,k}$. Note that $\sum_{v \in T} d_{G-S}(v) = a - 1$, we have
$b|S|-a|T| + \sum_{v \in T} d_{G-S}(v) = b(a + k) - a(b+1)+a-1 = bk-1<bk$. By Lemma \ref{le:1}, it follows that the graph $F_{n}^{a,b,k}$ is not an $(a,b,k)$-critical graph. 

	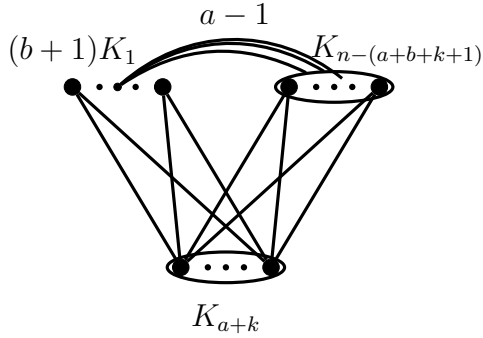
\begin{figure}[htbp]
	\centering
	\begin{minipage}[c]{0.3\textwidth}
		\begin{tikzpicture}[scale =1.2]
			\node[circle,fill=black,draw=black,inner sep=2.2pt] (v1) at (-0.4,0) {};
			\node[circle,fill=black,draw=black,inner sep=2.2pt] (v2) at (-0.2,2.0) {};
			\node[circle,fill=black,draw=black,inner sep=2.2pt] (v5) at (0.8,2.0) {};
			\node[circle,fill=black,draw=black,inner sep=2.2pt] (v6) at (-1.4,0) {};
			\node[circle,fill=black,draw=black,inner sep=2.2pt] (v7) at (-1.6,2.0) {};
			\node[circle,fill=black,draw=black,inner sep=2.2pt] (v8) at (-2.6,2.0) {};
			\node[circle,fill=black,draw=black,inner sep=1pt] (v9) at (-2.1,2.0)  {};
			\node (v10) at (-0.9,-0.3) {};
			\node (v11) at (1.0,2.4) {};
			\node (v12) at (0,-1.0) {};
			\node (v13) at (-2.6,2.4) {};
			\node (v14) at (0.5,2.) {};
			\node (v15) at (0.3,2.1) {};
			\node (v16) at (0.1,2.1) {};
			\node (v17) at (-0.8,2.8) {};
			\node[below] at (v1) { };
			\node[left] at (v2) { };
			\node[left] at (v5) { };
			\node[below] at (v6) { };
			\node[right] at (v7) { };
			\node[right] at (v8) { };
			\node[below] at (v10) {\large$K_{a+k}$};    
			\node[overlay] at (v11) {\large$K_{n-(a+b+k+1)}$};   
			\node[overlay] at (v13) {\large$(b+1)K_1$};  
			\node[overlay] at (v17) {\large$a-1$};   
			\draw [line width=1.2pt](v1) -- (v2);
			\draw [line width=1.2pt](v6) -- (v2);
			\draw [line width=1.2pt](v5) -- (v1);
			\draw [line width=1.2pt](v5) -- (v6);
			\draw[line width=1.2pt] (v6) -- (v7);
			\draw [line width=1.2pt](v6) -- (v8);
			\draw [line width=1.2pt](v7) -- (v1);
			\draw [line width=1.2pt](v8) -- (v1);
			\draw [line width=1.2pt](-0.9,0) ellipse (0.65cm and 0.17cm);
			\draw [line width=1.2pt](0.3,2.0) ellipse (0.65cm and 0.17cm);
			\draw[line width=1.2pt] (v9) to[bend left=40] (v14);  
			\draw[line width=1.2pt] (v9) to[bend left=35] (v15);  
			\draw[line width=1.2pt] (v9) to[bend left=30] (v16);  
			\fill (-0.7,0) circle (1pt); 
			\fill (-0.9,0) circle (1pt); 
			\fill (-1.1,0) circle (1pt); 
			\fill (0.1,2.0) circle (1pt); 
			\fill (0.3,2.0) circle (1pt); 
			\fill (0.5,2.0) circle (1pt); 
			\fill (-1.9,2.0) circle (1pt); 
			
			\fill (-2.3,2.0) circle (1pt); 
		\end{tikzpicture}
	\end{minipage}
	\caption{Graphs $F_{n}^{a,b,k}$ } 
	\label{figure:bga} 
	\end{figure}

It is evident that Problem \ref{prob4}  generalizes the three preceding problems, as setting $k=0$ allows us to recover each of them. Motivated by Problem \ref{prob4} and \cite{FLZ}, our work investigates spectral conditions for a graph to be $(a,b,k)$-critical or fractional $(a,b,k)$-critical. The main results are as follows.

\noindent\begin{theorem}\label{T1}  \  For integers $b> a\ge 1$ and $k\ge 0$, and let $G$ be a connected graph of order $n \geq 2(b+a+k + 2)(b+k + 2)$  with minimum degree $\delta(G)\ge a+k$. If
 $$\lambda(G) \geq \lambda(F_{n}^{a,b,k}),$$
 then $G$ is an $(a,b,k)$-critical graph, unless $G \cong F_{n}^{a,b,k}$.($F_{n}^{a,b,k}$ is shown in Figure 1).
\end{theorem}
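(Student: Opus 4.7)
The plan is a proof by contradiction. Suppose $G$ is not $(a,b,k)$-critical; the aim is to show that $\lambda(G) < \lambda(F_n^{a,b,k})$ unless $G \cong F_n^{a,b,k}$. First I would invoke the Lovász-type characterization of $(a,b,k)$-critical graphs (the same Lemma \ref{le:1} used in the preamble to verify that $F_n^{a,b,k}$ is not itself $(a,b,k)$-critical). This yields disjoint $S, T \subseteq V(G)$ with
\[
b|S| - a|T| + \sum_{v\in T} d_{G-S}(v) \leq bk - 1.
\]
The hypothesis $\delta(G) \geq a+k$ forces $d_{G-S}(v) \geq a+k-|S|$ for every $v \in T$; substituting this into the violated inequality yields $(|S|-k)(|T|-b) \geq 1$ when $|S| < a+k$, so $|S| \geq k+1$ and $|T| \geq b+1$, and a direct manipulation for $|S| \geq a+k$ again forces $|T| \geq b+1$. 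Using $b > a$ one sees that $(|S|,|T|) = (a+k, b+1)$ is precisely the boundary configuration.

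The second step is structural domination. Writing $s = |S|$ and $t = |T|$, the graph $G$ is a spanning subgraph of
\[
H(s,t) := K_s \vee \bigl(K_{n-s-t} \cup J_t\bigr),
\]
where $J_t$ is a graph on $t$ vertices (possibly together with a small number of edges from $T$ to $V(G)\setminus(S\cup T)$) whose total edge contribution is limited by the residual $\sum_{v\in T} d_{G-S}(v)$ permitted by the violated inequality. Hence $\lambda(G) \leq \lambda(H(s,t))$, with equality only when $G$ coincides with the corresponding extremal instance of $H(s,t)$. The choice $(s,t) = (a+k, b+1)$ with the $a-1$ slack edges concentrated at a single vertex of $T$ reproduces $F_n^{a,b,k}$ exactly.

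The third step is the spectral comparison. For each admissible $(s,t)$ I would use the equitable partition of $H(s,t)$ by its orbits of almost-twin vertices (the clique $K_s$, the big clique $K_{n-s-t}$, the low-degree vertices of $T$, and the distinguished vertex of $T$ carrying the slack), reducing $\lambda(H(s,t))$ to the Perron root of a small quotient matrix. The analogous reduction for $F_n^{a,b,k}$, together with the lower estimate $\lambda(F_n^{a,b,k}) \geq n - b - 2$ coming from the large clique $K_{n-(a+b+k+1)}$, lets one evaluate both characteristic polynomials at a common test value; the lower bound $n \geq 2(a+b+k+2)(b+k+2)$ is calibrated precisely so that every $(s,t) \neq (a+k, b+1)$ and every non-extremal distribution of the slack edges gives a strictly smaller Perron root.

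The main obstacle will be the case analysis in the third step. Because the Lovász-type inequality is violated by only $1$, several candidate dominating graphs $H(s,t)$ have Perron root close to $\lambda(F_n^{a,b,k})$, and the boundary subcases — namely $k+1 \leq s \leq a+k-1$, where $T$ must carry non-trivial internal structure to keep $\delta(G) \geq a+k$, and $t$ slightly greater than $b+1$, where additional low-degree vertices can partly compensate for the loss in the join factor — require the full strength of the hypothesis on $n$. Outside these borderline regimes, the comparisons should follow from standard Perron monotonicity together with an order-of-magnitude edge count showing that $H(s,t)$ is substantially sparser than $F_n^{a,b,k}$ whenever $(s,t)$ is far from the extremal pair.
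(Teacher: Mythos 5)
Your outline follows the same skeleton as the paper's argument: negate the conclusion, invoke the Liu--Wang characterization (Lemma \ref{le:2}) to produce $S$ and $T$ with $\sum_{v\in T}d_{G-S}(v)\le bk-1-b|S|+a|T|$, use $\delta(G)\ge a+k$ to derive $(|S|-k)(|T|-b)\ge 1$ (hence $|S|\ge k+1$, $|T|\ge b+1$), pass to a dominating join-type graph, and compare spectral radii. Those preliminary deductions are correct. You do omit one bound the paper needs repeatedly, namely $|S|\le |T|+k-1$ (which follows from $0\le bk-1-b|S|+a|T|$ and $b>a$); without it the edge-count estimates in the large-$t$ regime do not close.

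The genuine gap is in your third step, which is where the entire difficulty of the theorem lives and which you assert rather than prove. Two specific problems. First, the claim that among all distributions of the $a-1$ slack edges in $K_{a+k}\vee(K_{n-a-b-k-1}\cup(b+1)K_1)$ the concentrated one ($F_n^{a,b,k}$) uniquely maximizes the Perron root is itself a nontrivial lemma (the paper's Lemma \ref{le:10}); its proof is not a quotient-matrix evaluation but a two-eigenvector comparison $\mathbf{y}^{T}(A(G_0)-A(G))\mathbf{x}$ combined with explicit eigen-equation estimates, and it consumes several pages. Second, your plan to compare $\lambda(H(s,t))$ with $\lambda(F_n^{a,b,k})$ by evaluating quotient characteristic polynomials at a common test value is unlikely to work uniformly: in the regime $t\ge b+2$, $s\ge a+k$ both Perron roots lie in the narrow window $(n-b-2,\,n-b-1)$, and edge counts plus Hong's bound (the mechanism behind such test-value comparisons) are only sharp enough when $s\le a+k-1$ or $t\ge n/(b+k+2)$. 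For the remaining subcase the paper abandons polynomial evaluation entirely and instead performs an edge-rotation to $G^{*}\cong K_{a+k}\vee(K_{n-a-b-k-1}\cup(b+1)K_1)$, proving $\lambda(G^{*})>\lambda(G)$ via the bilinear form $\mathbf{y}^{T}(A(G^{*})-A(G))\mathbf{x}$ after establishing delicate eigenvector inequalities such as $2x(w_{n-s-t})>x(v_1)$ and $y(u_1)>2y(u_{t-b})$. Your proposal correctly flags this regime as the obstacle but supplies no mechanism that would overcome it, so as written the argument does not go through.
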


\noindent\begin{theorem}\label{T2}  \  For integers $b> a\ge 1$ and $k\ge 0$, let $G$ be a connected graph of order $n\ge4a+\frac{5b}{2}+4k+7$ with minimum degree $\delta(G)\ge a+k$. If
	$$e(G)\ge \binom{n-b-1}{2}+ab+2a+(b+1)k,$$
	then $G$ is an $(a,b,k)$-critical graph. 
\end{theorem}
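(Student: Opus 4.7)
The plan is to argue by contradiction via the structural criterion for $(a,b,k)$-criticality (Lemma~\ref{le:1}): $G$ is $(a,b,k)$-critical if and only if for every $S\subseteq V(G)$ with $|S|\ge k$ and $T=\{v\in V(G)\setminus S:d_{G-S}(v)\le a-1\}$, the inequality $b|S|-a|T|+d_{G-S}(T)\ge bk$ holds, where $d_{G-S}(T):=\sum_{v\in T}d_{G-S}(v)$. Assuming $G$ is not $(a,b,k)$-critical, I would fix $S,T$ with
\[
b|S|-a|T|+d_{G-S}(T)\le bk-1,
\]
and write $s=|S|$, $t=|T|$. First I would extract lower bounds on $s$ and $t$ from the minimum-degree hypothesis $\delta(G)\ge a+k$: each $v\in T$ satisfies $a+k\le d_G(v)\le s+(a-1)$, so $s\ge k+1$ and, more sharply, $d_{G-S}(v)\ge a+k-s$ whenever $s\le a+k$. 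Plugging $d_{G-S}(T)\ge t(a+k-s)$ into the violation and simplifying yields $(s-k)(t-b)\ge 1$ for $s\le a+k$, hence $t\ge b+1$; for $s\ge a+k+1$, nonnegativity of $d_{G-S}(T)$ together with $b>a$ again forces $t\ge b+1$. So $t\ge b+1$ in every feasible case.

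Next I would bound the edges of $G$ by partitioning $V(G)$ into $S$, $T$, and $V(G)\setminus(S\cup T)$:
\[
e(G)\le \binom{s}{2}+s(n-s)+\binom{n-s-t}{2}+d_{G-S}(T)\le \phi(s,t),
\]
where $\phi(s,t):=\binom{s}{2}+s(n-s)+\binom{n-s-t}{2}+at-bs+bk-1$, using $d_{G-S}(T)\le at-bs+bk-1$ from the violation. A direct expansion gives $\phi(a+k,b+1)=\binom{n-b-1}{2}+ab+2a+(b+1)k-1$, exactly one less than the hypothesis on $e(G)$.

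The crux of the argument is to show $\phi(s,t)\le \phi(a+k,b+1)$ throughout the feasible region. The discrete differences
\[
\phi(s+1,t)-\phi(s,t)=t-b,\qquad \phi(s,t+1)-\phi(s,t)=a+s+t+1-n,
\]
combined with $t\ge b+1$ and $n\ge 4a+\tfrac{5b}{2}+4k+7$, show that $\phi$ strictly increases in $s$ and strictly decreases in $t$. Hence the maximum occurs on the feasibility frontier with $t$ minimal and $s$ maximal. A case split on $s\le a+k$ versus $s\ge a+k+1$ then pinpoints the maximum at $(a+k,b+1)$: in the first case the smallest feasible $t$ is $b+1$, so $\phi(\cdot,b+1)$ (rising at rate $1$ in $s$) peaks at $s=a+k$; in the second, feasibility forces $at\ge b(s-k)+1$, so $t$ must grow by at least $\lceil b/a\rceil\ge 2$ per unit increase in $s$, and the large-$n$ hypothesis guarantees that the resulting drop in $\phi$ from raising $t$ dominates the gain from raising $s$. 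Combining everything then yields $e(G)\le \phi(a+k,b+1)=\binom{n-b-1}{2}+ab+2a+(b+1)k-1<e(G)$, the desired contradiction.

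The hardest step will be this last case analysis: verifying that the constant in $n\ge 4a+5b/2+4k+7$ is just large enough for the $t$-decrease to always dominate the $s$-increase once $s\ge a+k+1$ reduces to a handful of elementary polynomial inequalities in $n$, with the boundary values $s=a+k+1$ and $t$ close to $b+2$ being the tightest.
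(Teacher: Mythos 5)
Your setup coincides with the paper's: the same contradiction via Lemma~\ref{le:1}/\ref{le:2}, the same preliminary bounds $s\ge k+1$ and $t\ge b+1$, and your $\phi(s,t)$ is exactly the paper's edge bound, since $\binom{s}{2}+s(n-s)+\binom{n-s-t}{2}=\binom{n-t}{2}+st$; the evaluation $\phi(a+k,b+1)=\binom{n-b-1}{2}+ab+2a+(b+1)k-1$ is also correct. The genuine gap is the claimed monotonicity in $t$. The difference $\phi(s,t+1)-\phi(s,t)=a+s+t+1-n$ is negative only while $t<n-s-a-1$; as a function of $t$, $\phi(s,\cdot)=\binom{n-t}{2}+(s+a)t+\mathrm{const.}$ is \emph{convex}, and it is increasing near the right end of the feasible range (at $t=n-s$ the forward difference equals $a+1>0$). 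Neither $t\ge b+1$ nor $n\ge 4a+\frac{5b}{2}+4k+7$ excludes large $t$: for instance, if $G$ is $K_{k+1}$ joined to a disjoint union of copies of $K_a$, one may take $S$ with $s=k+1$ and then $T=V(G)\setminus S$, so $t=n-k-1>n/2$ is genuinely attainable within your constraints. Hence the assertion that the maximum of $\phi$ occurs at minimal $t$ is unjustified; you must also control $\phi$ at the right endpoint $t=n-s$, i.e.\ handle the regime where $t$ is of order $n$ separately. This is exactly the paper's Case~2 ($t\ge\frac{n+1}{2}$), where $s\le n-t$ is substituted and the deficiency $y(t)$ is shown positive by a separate quadratic estimate relying on $n\ge 4a+\frac{5b}{2}+4k+7$.

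For $t$ in the small range (say $b+1\le t\le n/2$), your frontier argument---increase in $s$ gains $t-b$ per step, while feasibility $at\ge b(s-k)+1$ forces $t$ up by at least $\lceil b/a\rceil\ge 2$ per unit of $s$ beyond $a+k$, and each unit of $t$ costs $n-a-s-t-1$---is a workable alternative to the paper's direct analysis (which splits on whether $s>a-\frac{1}{b-a}+k$ and evaluates the quadratics $q(t)$, $r(t)$ at $t=b+1$ and $t=n/2$). The deferred polynomial verifications there are routine. But as written, the proof is incomplete without the large-$t$ case.
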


{\bf Remark 1.7.} The result of Theorem \ref{T2} achieves the best possible condition, as evidenced by the graph $F_{n}^{a,b,k}$. By direct calculation,  $e(F_{n}^{a,b,k})=\binom{n-b-1}{2}+(b+1)(a+k)+a-1=\binom{n-b-1}{2}+ab+2a+(b+1)k-1$. Recall that $F_{n}^{a,b,k}$ is not an $(a,b,k)$-critical graph, hence the condition in Theorem \ref{T2} is best possible.

Let $S = V(K_{a+k})$ and $T = V((b + 1)K_1)$ in $F_{n}^{a,b,k}$, by Lemma \ref{le:3}, we conclude that $F_{n}^{a,b,k}$ is not fractional $(a,b,k)$-critical. When $b>a\ge1$, we can derive a theorem that ensures the graph is fractional $(a,b,k)$-critical by using the proof idea of Theorem \ref{T1}.

\noindent\begin{theorem}\label{T3}  \  For integers $b> a\ge 1$ and $k\ge 0$, and let $G$ be a connected graph of order $n \geq 2(b+a+k + 2)(b+k + 2)$  with minimum degree $\delta(G)\ge a+k$. If
	$$\lambda(G) \geq \lambda(F_{n}^{a,b,k}),$$
	then $G$ is a fractional $(a,b,k)$-critical graph, unless $G \cong F_{n}^{a,b,k}$.
\end{theorem}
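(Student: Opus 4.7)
The plan is to mirror the proof of Theorem \ref{T1}, substituting the fractional characterization Lemma \ref{le:3} for the integer characterization Lemma \ref{le:1}. The same extremal graph $F_{n}^{a,b,k}$ already fails the fractional $(a,b,k)$-critical condition via the pair $S = V(K_{a+k})$, $T = V((b+1)K_1)$, as noted immediately before the theorem statement, so $F_{n}^{a,b,k}$ remains the natural candidate for the extremal graph. A convenient feature of the fractional variant is that the characterization has no isolated-component correction term, so the reduction to a concrete comparison graph is slightly cleaner than in the integer case.

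Suppose for contradiction that $G \not\cong F_{n}^{a,b,k}$ and that $G$ is not fractional $(a,b,k)$-critical. By Lemma \ref{le:3}, there exist disjoint vertex sets $S, T \subseteq V(G)$ satisfying
$$b|S| - a|T| + \sum_{v\in T} d_{G-S}(v) \leq bk - 1.$$
Combining this with $\delta(G) \geq a+k$, so that $d_{G-S}(v) \geq a+k-|S|$ for every $v \in T$, is the first step: I would split into cases according to the size of $|S|$ relative to $a+k$, and exploit the order bound $n \geq 2(b+a+k+2)(b+k+2)$ to pin $(|S|,|T|)$ into a narrow window around $(a+k,\,b+1)$, ruling out configurations where $|T|$ is too large or $|S|$ is too small. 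The degenerate case $|S|=0$ would be disposed of separately using the minimum degree hypothesis to contradict the deficiency inequality directly.

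I would then bound $\lambda(G)$ by embedding $G$ into an edge-maximal graph $H$ of the form $K_{|S|} \vee \bigl(K_{n-|S|-|T|} \cup H_T\bigr)$, where $H_T$ is almost edgeless on $T$ augmented by just enough edges between one vertex of $T$ and $K_{n-|S|-|T|}$ to realize the prescribed value of $\sum_{v\in T} d_{G-S}(v)$ from the deficiency inequality. Edge-monotonicity of the spectral radius gives $\lambda(G) \leq \lambda(H)$, and an equitable partition of $H$ into three or four parts produces a quotient matrix of small order whose largest eigenvalue equals $\lambda(H)$, giving a tractable characteristic polynomial as a function of $|S|$, $|T|$, and the number of added edges.

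The main obstacle is the final monotonicity comparison: I must show that as $(|S|,|T|)$ ranges over the admissible window, $\lambda(H)$ is strictly less than $\lambda(F_{n}^{a,b,k})$ except when the parameters agree with those of $F_{n}^{a,b,k}$, in which case the edge-maximal $H$ coincides with $F_{n}^{a,b,k}$ and the inclusion $G \subseteq H$ together with $\lambda(G) \geq \lambda(F_{n}^{a,b,k})$ forces $G \cong F_{n}^{a,b,k}$, contradicting the assumption. Establishing this strict inequality reduces to analyzing the roots of the quotient characteristic polynomial as functions of the parameters; the quantitative hypothesis $n \geq 2(b+a+k+2)(b+k+2)$ is calibrated precisely to make each of these comparisons strict, so the hardest bookkeeping is to verify that no boundary configuration of $(|S|,|T|)$ slips through without also forcing $H = F_{n}^{a,b,k}$.
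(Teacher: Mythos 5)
Your overall architecture --- extract a deficiency pair from Lemma~\ref{le:3}, use the minimum degree, pass to an edge-maximal comparison graph, and finish by comparing spectral radii with $F_n^{a,b,k}$ --- has the right shape, and indeed the paper proves Theorem~\ref{T3} simply by rerunning the proof of Theorem~\ref{T1} with Lemma~\ref{le:3} in place of Lemma~\ref{le:2}. However, two of your steps fail as stated. First, the claim that the minimum degree and the order bound alone ``pin $(|S|,|T|)$ into a narrow window around $(a+k,b+1)$'' is false. From $d_{G-S}(v)\ge a+k-|S|$ and the deficiency inequality one gets only $|S|\ge k+1$, $|T|\ge b+1$ and $|S|\le |T|+k-1$; there is no upper bound on $|T|$, because in the fractional setting $T=\{x: d_{G-S}(x)\le a\}$, so a vertex with $d_{G-S}(v)=a$ lies in $T$ while contributing nothing to the deficiency $a|T|-\sum_{v\in T}d_{G-S}(v)$, and hence $|T|$ can be linear in $n$. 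The paper excludes the large-$|T|$ regime only by combining the spectral hypothesis $\lambda(G)\ge\lambda(F_n^{a,b,k})>n-b-2$ with the Hong--Shu--Fang bound (Lemmas~\ref{le:7} and~\ref{le:8}) applied to the edge count $e(G)\le \sum_{v\in T}d_{G-S}(v)+st+\binom{n-t}{2}$, which for $t\ge n/(b+k+2)$ forces $\lambda(G)<n-b-2$. Without some such argument your ``admissible window'' is the entire range $b+1\le t\lesssim n$, and the proposed root analysis of a quotient characteristic polynomial uniformly over that range is a far larger task than you acknowledge.

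Second, ``edge-monotonicity of the spectral radius gives $\lambda(G)\le\lambda(H)$'' is not valid for the $H$ you construct: making $T$ independent and concentrating the $T$--$W$ edges on a single vertex of $T$ does not yield a supergraph of $G$, so Lemma~\ref{le:4} does not apply. The paper reaches this normalized graph by Perron-vector switching arguments: it first shows that a spectral-radius-maximal counterexample has $G[T]$ independent (replacing an edge inside $T$ by a $T$--$W$ edge increases $\lambda$ while preserving the deficiency violation, via Lemmas~\ref{le:5} and~\ref{le:6}), and then Lemma~\ref{le:10} --- itself a substantial eigenvector computation of the form $\mathbf{y}^{T}(\lambda_0-\lambda)\mathbf{x}>0$ --- shows that within $\mathscr{F}_n^{a,b,k}$ the spectral radius is maximized precisely when the $a-1$ extra edges are concentrated at one vertex of $T$. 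These switching lemmas, together with the Hong-bound treatment of large $t$ and the separate eigenvector argument for $t\ge b+2$ with $s\ge a+k$, are the missing ingredients; once supplied, your plan collapses into essentially the paper's proof.
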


We use $F_{n}^{r,k}=F_n^{r,r,k}$ to denote the graph obtained from $K_{r+k} \vee \left( K_{n-2r-k-1} \cup (r + 1)K_1 \right)$ by adding $r-1$ edges between one vertex in $V((r + 1)K_1)$ and $r - 1$ vertices in $V(K_{n-2r-k-1})$.

However, the conditions for identifying fractional $(a,b,k)$-critical graphs in Lemma \ref{le:3} includes the case where $a = b$. If $a=b=r$, the fractional $(r,r,k)$-critical graph $G$ is called a fractional $(r,k)$-critical graph. Therefore, by following the proof method of Theorem \ref{T1}, we have established the following theorem. 

\noindent\begin{theorem}\label{T4}  \  For integers $r\ge 1$ and $k\ge 0$, and let $G$ be a connected graph of order $n \geq 2(2r+k + 2)(r+k + 2)$  with minimum degree $\delta(G)\ge r+k$. If
	$$\lambda(G) \geq \lambda(F_{n}^{r,k}),$$
	then $G$ is a fractional $(r,k)$-critical graph, unless $G \cong F_{n}^{r,k}$.
\end{theorem}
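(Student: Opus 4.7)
The plan is to prove Theorem \ref{T4} by contradiction, specializing the argument of Theorems \ref{T1} and \ref{T3} to the diagonal case $a = b = r$. Suppose $G$ satisfies the spectral hypothesis but is not fractional $(r,k)$-critical. Applying Lemma \ref{le:3} with $a = b = r$, we obtain disjoint subsets $S, T \subseteq V(G)$ with $|S| \geq k$ satisfying the barrier inequality
$$r|S| - r|T| + \sum_{v \in T} d_{G-S}(v) \leq rk - 1.$$
Write $s = |S|$, $t = |T|$, and $W = V(G) \setminus (S \cup T)$; the extremal configuration matching $F_n^{r,k}$ corresponds to $s = r+k$ and $t = r+1$.

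First I would use the minimum degree assumption $\delta(G) \geq r+k$ to restrict $(s,t)$: every $v \in T$ has $d_{G-S}(v) \geq \max\{0, r+k-s\}$, which combined with the barrier forces both an upper bound on $|s - (r+k)|$ and a structural relation between $t$ and $s$, with the order condition $n \geq 2(2r+k+2)(r+k+2)$ ensuring $W$ is large enough for subsequent monotonicity arguments. Next, I would bound $\lambda(G)$ via the quotient matrix associated with the partition $(S, T, W)$: $G$ is a spanning subgraph of $K_s \vee (K_{n-s-t} \cup H_T)$ augmented by at most $\tfrac{1}{2}(rt - rs + rk - 1)$ further edges incident to $T$, where $H_T$ is a graph on $T$. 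Applying Rayleigh together with Perron--Frobenius yields an explicit upper bound $f(s,t)$ on $\lambda(G)$, and computing $\lambda(F_n^{r,k})$ from its own equitable quotient matrix permits a case analysis exploiting the monotonicity of $f$ in $s$ and $t$ to conclude $f(s,t) < \lambda(F_n^{r,k})$ whenever $(s,t) \neq (r+k, r+1)$, contradicting the spectral hypothesis.

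The main obstacle is the boundary case $(s,t) = (r+k, r+1)$, where $f(s,t) = \lambda(F_n^{r,k})$ and equality in Rayleigh must be squeezed to pin down the isomorphism type of $G$. A local edge-shifting argument on the Perron eigenvector should force the $r-1$ residual edges between $T$ and $W$ to concentrate on a single vertex of $T$, joined to $r-1$ vertices of the $K_{n-2r-k-1}$ block, thereby reproducing the construction of $F_n^{r,k}$ exactly. As noted in the paragraph preceding the theorem, Lemma \ref{le:3} applies uniformly whether $a < b$ or $a = b$, so the machinery developed for Theorem \ref{T1} transfers without modification, and the conclusion $G \cong F_n^{r,k}$ follows.
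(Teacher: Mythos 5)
Your outline shares the paper's skeleton (contradiction via the Lemma \ref{le:3} barrier with $a=b=r$, the bounds on $s$ and $t$ from $\delta(G)\ge r+k$, a case analysis on $(s,t)$, and an equality analysis at $(s,t)=(r+k,r+1)$ via Lemma \ref{le:11}), but the middle of your argument has a genuine gap. You propose to derive a single explicit bound $f(s,t)\ge\lambda(G)$ from the partition $(S,T,W)$ and then finish by ``monotonicity of $f$ in $s$ and $t$.'' Two problems. First, a quotient matrix of a non-equitable partition bounds the largest eigenvalue from \emph{below}, not above; the upper bound must come from embedding $G$ in a host graph $K_s\vee(K_{n-s-t}\cup H_T)$ plus extra $T$-incident edges, and the spectral radius of that host depends on how the up to $rt-rs+rk-1$ extra edge-endpoints (not half that number -- each $T$--$W$ edge contributes one endpoint) are distributed, so $f(s,t)$ is not a single computable quantity. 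Second, and more seriously, no clean monotone bound suffices in the regime $s\ge r+k$, $t\ge r+2$: there the candidate configurations have spectral radius within $o(1)$ of $\lambda(F_n^{r,k})$, the Hong--Shu--Fang edge-count bound (which the paper does use when $t\ge n/(r+k+2)$ or $s\le r+k-1$) is too weak, and the paper instead performs a delicate two-eigenvector comparison $\mathbf{y}^{T}(A(G^{*})-A(G))\mathbf{x}>0$ against a surgered graph $G^{*}\cong K_{r+k}\vee(K_{n-2r-k-1}\cup(r+1)K_1)$, relying on auxiliary inequalities such as $2x(w_{n-s-t})>x(v_1)$ and $y(u_1)>2y(u_{t-r})$. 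That comparison is the technical heart of the proof and is absent from your plan.

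Relatedly, you drop the extremal-graph framework: the paper takes $G$ to maximize $\lambda$ among connected non-critical graphs (with $|S|$ maximal), which is what forces $G[V(G)\setminus T]\cong K_{n-t}$, $G[S,T]\cong K_{s,t}$, and $G[T]$ independent, and what turns ``$G^{*}$ is still not fractional $(r,k)$-critical with $\lambda(G^{*})>\lambda(G)$'' into a contradiction. Without that setup, your boundary-case ``squeeze equality in Rayleigh'' step must control arbitrary distributions of the residual edges inside $T$ and between $T$ and $W$ simultaneously, which is considerably harder than the concentration argument of Lemma \ref{le:10}/\ref{le:11} that the paper applies only after the structure of $G$ has already been pinned down. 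To repair the proposal, adopt the extremal setup and supply the Subcase 2.2.2 comparison explicitly.
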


By combining with Theorems \ref{T3} and \ref{T4}, we obatin the following theorem.

\noindent\begin{theorem}\label{T5}   \  For integers $b\ge a\ge 1$ and $k\ge 0$, and let $G$ be a connected graph of order $n \geq 2(b+a+k + 2)(b+k + 2)$  with minimum degree $\delta(G)\ge a+k$. If
	$$\lambda(G) \geq \lambda(F_{n}^{a,b,k}),$$
	then $G$ is a fractional $(a,b,k)$-critical graph, unless $G \cong F_{n}^{a,b,k}$.
\end{theorem}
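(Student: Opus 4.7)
The plan is to deduce Theorem \ref{T5} directly from Theorems \ref{T3} and \ref{T4} by a case split on whether $b>a$ or $a=b$, since the statement itself advertises that Theorem \ref{T5} is the combination of the two. In both cases the hypotheses of Theorem \ref{T5} specialize to exactly the hypotheses of one of the earlier theorems, so no new argument is required beyond verifying that the notation matches.

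First I would dispose of the case $b>a\geq 1$. Here the hypotheses of Theorem \ref{T5} are literally the hypotheses of Theorem \ref{T3}: the integers satisfy $b>a\geq 1$ and $k\geq 0$, the order bound $n\geq 2(b+a+k+2)(b+k+2)$ holds, the minimum degree condition $\delta(G)\geq a+k$ holds, and the spectral condition $\lambda(G)\geq \lambda(F_n^{a,b,k})$ holds. Applying Theorem \ref{T3} yields that $G$ is fractional $(a,b,k)$-critical unless $G\cong F_n^{a,b,k}$, as required. Next, in the case $a=b$ I would set $r:=a=b$ and invoke the paper's notational convention $F_n^{r,k}=F_n^{r,r,k}$, together with the convention that a fractional $(r,k)$-critical graph is by definition a fractional $(r,r,k)$-critical graph. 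The order bound becomes $n\geq 2(b+a+k+2)(b+k+2)=2(2r+k+2)(r+k+2)$, the minimum degree becomes $\delta(G)\geq r+k$, and the spectral bound becomes $\lambda(G)\geq \lambda(F_n^{r,k})$; these are exactly the hypotheses of Theorem \ref{T4}. Applying Theorem \ref{T4} then yields that $G$ is fractional $(r,k)$-critical, equivalently fractional $(a,b,k)$-critical, unless $G\cong F_n^{r,k}=F_n^{a,b,k}$.

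There is no real obstacle here beyond bookkeeping: the substantive content is already carried by Theorems \ref{T3} and \ref{T4}. The only point that genuinely requires checking is the compatibility of the two notational regimes, namely that $F_n^{r,k}$ coincides with $F_n^{a,b,k}$ when $a=b=r$ and that the definitions of fractional $(r,k)$-critical and fractional $(a,b,k)$-critical graphs agree under the same specialization; both are supplied explicitly in the paper immediately before the statement of Theorem \ref{T4}.
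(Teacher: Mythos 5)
Your proposal is correct and is exactly the paper's own derivation: the paper obtains Theorem \ref{T5} simply by combining Theorem \ref{T3} (the case $b>a\ge 1$) with Theorem \ref{T4} (the case $a=b=r$, using the conventions $F_n^{r,k}=F_n^{r,r,k}$ and fractional $(r,k)$-critical $=$ fractional $(r,r,k)$-critical). The hypothesis matching you carry out is all that is needed.
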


\subsection{Structure and Organization}

The remainder of this paper is organized as follows. In Section 2, we presents essential lemmas for the proofs of subsequent theorems. In Section 3, we presents the proofs of Theorems \ref{T1} and \ref{T2}. In Section 4, we presents the proofs of Theorems \ref{T4}. In Section 5, we explore several extensions to our results and propose a conjecture for future research.

\section{Preliminaries}

\quad\quad In this section, we presents essential lemmas for the proofs of subsequent theorems. The following lemma, proposed by Liu and Wang \cite{LW}, is a key tool for judging whether a graph $G$ is an $(a,b,k)$-critical graph, and it lays the theoretical foundation for related research.

\begin{lemma}(\cite{LW})\label{le:1}  \ Let $a$, $b$ and $k$ be nonegative integers and $b> a\ge1$. For a graph $G$ of order $n\ge a+k+1$, $G$ is $(a, b, k)$-critical if and only if for any $S \subseteq V(G)$ with $|S| \geq k$,
	$$\sum_{j=0}^{a-1} (a-j) p_{j}(G-S) \leq b |S| - b k,$$
	where $p_{j}(G-S)=|\{ v\mid d_{G-S}(v)=j\}|.$
\end{lemma}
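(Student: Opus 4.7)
My plan is to reduce this lemma to the classical Lovász criterion for $[a,b]$-factor existence, applied individually to each deletion $G-U$ with $|U|=k$, and then to collapse the two parameters $U$ and $S'$ into a single set $S=U\cup S'$.

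First, I would invoke the standard consequence of the Lovász $(g,f)$-factor theorem with $g\equiv a$ and $f\equiv b$. Because $b>a$, one has $g(v)\neq f(v)$ everywhere, so the parity/odd-component contribution vanishes, and a graph $H$ has an $[a,b]$-factor if and only if for every $S'\subseteq V(H)$ and every $T\subseteq V(H)\setminus S'$,
$$a|T|-\sum_{v\in T}d_{H-S'}(v)\;\leq\;b|S'|.$$
Maximizing the left-hand side over $T$ forces $T$ to consist exactly of the vertices $v\notin S'$ with $d_{H-S'}(v)<a$, so grouping those vertices by their degree class collapses the criterion to the compact deficiency form
$$\sum_{j=0}^{a-1}(a-j)\,p_j(H-S')\;\leq\;b|S'|,\qquad(\ast)$$
which will be the workhorse of the whole argument.

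For the forward direction, I assume $G$ is $(a,b,k)$-critical and fix an arbitrary $S\subseteq V(G)$ with $|S|\geq k$. I pick any $U\subseteq S$ of size $k$ and set $S'=S\setminus U$. Since $G-U$ has an $[a,b]$-factor, I apply $(\ast)$ to $H=G-U$ and $S'$; using the identities $G-U-S'=G-S$ and $|S'|=|S|-k$, this is precisely $\sum_{j=0}^{a-1}(a-j)p_j(G-S)\leq b(|S|-k)$, as required. For the converse, I assume the displayed inequality holds for every $S\subseteq V(G)$ with $|S|\geq k$, and I fix an arbitrary $U\subseteq V(G)$ with $|U|=k$. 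To verify $(\ast)$ for $H=G-U$ against an arbitrary $S'\subseteq V(G-U)$, I set $S=U\cup S'$, note that $|S|=k+|S'|\geq k$, apply the hypothesis, and use $p_j(G-S)=p_j((G-U)-S')$; this yields $(\ast)$ for $G-U$, so $G-U$ admits an $[a,b]$-factor, and since $U$ was arbitrary, $G$ is $(a,b,k)$-critical.

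The main obstacle is really the preliminary single-graph criterion $(\ast)$, which carries essentially all the technical content of the lemma. The cleanest justification is a direct appeal to Lovász's general $(g,f)$-factor theorem, exploiting the strict inequality $b>a$ to discard the parity term; a self-contained alternative would push through a deficiency / auxiliary bipartite graph argument terminating in Hall's theorem, which is longer but avoids citing the general machinery. Once $(\ast)$ is granted, the outer bookkeeping — matching $k$-subsets $U$ with subsets $S'\subseteq V(G)\setminus U$ via the bijection $S=U\sqcup S'$ — is entirely routine, and the hypothesis $n\geq a+k+1$ is used only to guarantee that $G-U$ has at least $a+1$ vertices so that an $[a,b]$-factor can even be discussed.
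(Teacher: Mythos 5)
The paper does not prove this lemma at all: it is quoted verbatim from Liu and Wang \cite{LW}, and the only thing the paper does with it is the one-line algebraic reformulation giving Lemma \ref{le:2}. So there is no in-paper argument to compare against; judged on its own, your proposal is correct. The reduction of the $(a,b,k)$-critical property to the single-graph criterion $(\ast)$ via the bijection $S = U \sqcup S'$ is exactly the routine bookkeeping one expects, and both directions are handled properly (in particular, the identity $(G-U)-S' = G-S$ and the arithmetic $b|S|-bk = b|S'|$ are right, and vertices with $d_{H-S'}(v)=a$ contribute zero deficiency, so the passage from arbitrary $T$ to the canonical $T$ is harmless). The one place where you are leaning on external machinery is the criterion $(\ast)$ itself, i.e.\ the statement that for $b>a$ the Lov\'asz $(g,f)$-factor condition loses its odd-component term; this is a standard and correct consequence of $g(v)<f(v)$ for all $v$ (and is implicitly what Lemma \ref{le:2} encodes), but in a self-contained write-up you would want to either cite it precisely or include the short argument that the parity obstruction vanishes when $g\not\equiv f$ on every component. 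With that citation in place, your proof is complete and is almost certainly the same route taken in the original source.
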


Let $T = \{x \in V(G) \setminus S \mid d_{G-S}(x) \leq a-1\}$. Note that 	$\sum_{j=0}^{a-1} (a-j) p_{j}(G-S)=a\sum_{j=0}^{a-1}|\{ v\mid d_{G-S}(v)=j\}|-\sum_{j=0}^{a-1}j|\{ v\mid d_{G-S}(v)=j\}|=a|T|-\sum_{x \in T} d_{G-S}(x).$ Therefore, Lemma \ref{le:1} is equivalent to the following lemma.

\begin{lemma}\label{le:2}  \ Let $a$, $b$ and $k$ be nonegative integers and $b> a\ge1$. For a graph $G$ of order $n\ge a+k+1$, $G$ is $(a, b, k)$-critical if and only if for any $S \subseteq V(G)$ with $|S| \geq k$,
	$$ a|T|-\sum_{x \in T} d_{G-S}(x)  \leq b |S| - b k, $$
 where $T = \{x \in V(G)\setminus S \mid d_{G-S}(x) \leq a-1\}$.
\end{lemma}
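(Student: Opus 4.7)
The plan is to derive Lemma \ref{le:2} directly from Lemma \ref{le:1} via a purely algebraic rewriting of the left-hand side of the criterion. First I would fix an arbitrary $S \subseteq V(G)$ with $|S|\geq k$ and let $T = \{x \in V(G)\setminus S : d_{G-S}(x) \leq a-1\}$, so that $T$ is the disjoint union over $j=0,1,\dots,a-1$ of the sets $\{v \in V(G)\setminus S : d_{G-S}(v) = j\}$. Since $p_{j}(G-S)$ counts the vertices in $V(G)\setminus S$ of degree exactly $j$ in $G-S$, this partition yields the two identities
$$\sum_{j=0}^{a-1} p_{j}(G-S) \;=\; |T| \qquad \text{and} \qquad \sum_{j=0}^{a-1} j\, p_{j}(G-S) \;=\; \sum_{x \in T} d_{G-S}(x),$$
the second because summing $d_{G-S}(x)$ over $x \in T$ amounts to counting each $x$ of deficiency $j$ with weight exactly $j$.

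Next I would split the weight $(a-j) = a - j$ and apply these two identities to obtain the key equality
$$\sum_{j=0}^{a-1}(a-j)\,p_{j}(G-S) \;=\; a\sum_{j=0}^{a-1} p_{j}(G-S) \,-\, \sum_{j=0}^{a-1} j\, p_{j}(G-S) \;=\; a|T| - \sum_{x \in T} d_{G-S}(x).$$
Substituting this into the inequality $\sum_{j=0}^{a-1}(a-j)p_{j}(G-S) \leq b|S| - bk$ of Lemma \ref{le:1} produces precisely the inequality $a|T| - \sum_{x \in T} d_{G-S}(x) \leq b|S| - bk$ of Lemma \ref{le:2}. Since the rewriting is an equality valid for every admissible $S$, the universally quantified ``if and only if'' character of the criterion is preserved, and the two lemmas are logically equivalent.

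There is essentially no obstacle here: the proof is a short bookkeeping calculation. The only subtlety worth flagging is that both $p_{j}(G-S)$ and $T$ count vertices of $V(G)\setminus S$ (never of $S$), so the indexing is consistent on both sides; and $d_{G-S}(x)$ is well-defined precisely because $x \notin S$. The identities remain valid in degenerate cases (for instance when $T = \emptyset$, both sides of the key equality vanish), so no separate case analysis is required.
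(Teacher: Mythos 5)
Your proof is correct and takes essentially the same route as the paper, which likewise derives Lemma \ref{le:2} from Lemma \ref{le:1} by the identity $\sum_{j=0}^{a-1}(a-j)p_j(G-S) = a|T| - \sum_{x\in T} d_{G-S}(x)$ obtained from partitioning $T$ by degree in $G-S$. No further comment is needed.
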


The following lemma is an important theorem for determining whether a graph is fractional $(a, b, k)$-critical.

\begin{lemma}(\cite{GWC2,L})\label{le:3}  \ Let $a$, $b$ and $k$ be nonegative integers and $b\ge a\ge 1$. Then a graph $G$ is fractional $(a, b, k)$-critical if and only if
	$$ b|S| - a|T| + \sum_{x \in T} d_{G-S}(x) \geq bk $$
	holds for any $S \subseteq V(G)$ with $|S| \geq k$, where $T = \{x \in V(G) \setminus S \mid d_{G-S}(x) \leq a\}$.
\end{lemma}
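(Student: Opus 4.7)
\textbf{Proof proposal for Lemma \ref{le:3}.} The plan is to reduce the $(a,b,k)$-critical condition to a known Anstee/Liu--type characterization of fractional $[a,b]$-factors and then translate between the two by a simple substitution. First I would recall the classical result (this is the fractional analogue of Lov\'asz's $[a,b]$-factor theorem): a graph $H$ admits a fractional $[a,b]$-factor if and only if
$$ b|S_0| - a|T_0| + \sum_{x\in T_0} d_{H-S_0}(x) \ge 0 $$
for every $S_0\subseteq V(H)$, where $T_0=\{x\in V(H)\setminus S_0 : d_{H-S_0}(x)\le a\}$. This can either be invoked directly as a lemma from the literature or derived by applying the LP-duality / max-flow argument that produces the fractional edge function $h\colon E(H)\to[0,1]$ meeting the degree constraints. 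I would state it as the single external ingredient used below.

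Next I would prove the equivalence in the lemma using the substitution $S=U\cup S_0$. For the \emph{necessity} direction, assume $G$ is fractional $(a,b,k)$-critical and let $S\subseteq V(G)$ with $|S|\ge k$. Choose any $U\subseteq S$ with $|U|=k$ and set $S_0=S\setminus U$, so $H:=G-U$ has a fractional $[a,b]$-factor by hypothesis. Applying the Anstee/Liu characterization to $H$ with the set $S_0$ yields
$$ b|S_0|-a|T_0|+\sum_{x\in T_0} d_{H-S_0}(x)\ge 0, $$
where $T_0=\{x\in V(H)\setminus S_0: d_{H-S_0}(x)\le a\}$. Since $H-S_0=G-S$ we have $T_0=T$ and $d_{H-S_0}(x)=d_{G-S}(x)$, while $|S_0|=|S|-k$. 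Rearranging gives exactly $b|S|-a|T|+\sum_{x\in T} d_{G-S}(x)\ge bk$. For \emph{sufficiency}, assume the displayed inequality holds whenever $|S|\ge k$, and fix any $U\subseteq V(G)$ with $|U|=k$; I need to show $G-U$ has a fractional $[a,b]$-factor. Let $S_0\subseteq V(G-U)$ be arbitrary and put $S=U\cup S_0$, so $|S|=k+|S_0|\ge k$ and $U\cap S_0=\emptyset$. The hypothesis applied to $S$, together with the identities $T=T_0$ and $d_{G-S}(x)=d_{(G-U)-S_0}(x)$, reduces to the Anstee/Liu inequality for $G-U$ at $S_0$, so $G-U$ has a fractional $[a,b]$-factor. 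Since $U$ was arbitrary, $G$ is fractional $(a,b,k)$-critical.

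The main (and only) nontrivial obstacle is the Anstee/Liu characterization itself; once it is in hand, the $(a,b,k)$-critical version is just a bookkeeping translation via the decomposition $S=U\cup S_0$. One small subtlety worth being careful about is that in the necessity direction one must verify that the inequality is sharpest when $U\subseteq S$ (so that removing $k$ vertices from inside $S$ shrinks it by exactly $k$ and leaves $G-S$ unchanged); any other choice of $U$ would give a weaker bound and would not match the $-bk$ term. Handling the boundary case $|S|=k$ (where $S_0=\emptyset$ and $T$ is a subset of $V(G-U)$ consisting of vertices of degree $\le a$ in $G-S$) requires no additional argument since the Anstee/Liu inequality allows $S_0=\emptyset$. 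No conditions on $b>a$ versus $b=a$ are needed for this reduction, which is consistent with the lemma's scope $b\ge a\ge 1$.
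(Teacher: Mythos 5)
Your proposal is correct, but note that the paper does not actually prove Lemma~\ref{le:3} at all: it is quoted verbatim from the literature (\cite{GWC2,L}), exactly as Lemma~\ref{le:1} is quoted from \cite{LW}. So there is no in-paper argument to compare against; what you have written is a self-contained derivation that the authors chose to outsource. Your route --- invoking the Anstee/Liu--Zhang criterion that a graph $H$ has a fractional $[a,b]$-factor if and only if $b|S_0|-a|T_0|+\sum_{x\in T_0}d_{H-S_0}(x)\ge 0$ for all $S_0\subseteq V(H)$, and then translating via $S=U\cup S_0$ with $|U|=k$ --- is sound in both directions: for necessity you may always pick $U\subseteq S$ since $|S|\ge k$, and the identifications $H-S_0=G-S$, $T_0=T$, $|S_0|=|S|-k$ turn the nonnegativity condition into the $\ge bk$ condition; for sufficiency every pair $(U,S_0)$ arises this way from some admissible $S$. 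The one caveat is that your ``single external ingredient'' is itself the substantive theorem here (it is the fractional analogue of the Lov\'asz $(g,f)$-factor theorem and requires an LP-duality or alternating-path argument), so your proof is a reduction rather than a proof from scratch --- but that is precisely the status the lemma has in the cited sources, and your bookkeeping is accurate, including the observation that $b=a$ causes no difficulty and that $S_0=\emptyset$ is permitted.
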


The following lemma relates a graph's spectral radius to its subgraphs.

\begin{lemma}(\cite{BA})\label{le:4} \ Let $ G $ be a connected graph and $ G_0 $ be a subgraph of $ G $. Then 
	$$ \lambda(G_0) \leq \lambda(G), $$
	with the equality holds if and only if $ G_0 \cong G $.
\end{lemma}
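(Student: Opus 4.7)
My plan is to deduce this monotonicity from Perron–Frobenius theory applied to the adjacency matrix. Because $G$ is connected, $A(G)$ is a nonnegative, symmetric, irreducible matrix, so by Perron–Frobenius $\lambda(G)$ is a simple eigenvalue and admits a Perron eigenvector $\mathbf{x}$ that is strictly positive on every coordinate.

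For the inequality, I would view $A(G_0)$ as a principal submatrix of $A(G)$ (after relabeling, so that $V(G_0)\subseteq V(G)$), and extend it to an $n\times n$ matrix $B$ by padding with zero rows and columns indexed by $V(G)\setminus V(G_0)$. Since every edge of $G_0$ is an edge of $G$ and the new entries of $B$ are zero, we obtain $0\leq B\leq A(G)$ entrywise, while still $\rho(B)=\lambda(G_0)$. The standard monotonicity of the spectral radius for nonnegative matrices then yields $\lambda(G_0)=\rho(B)\leq\rho(A(G))=\lambda(G)$.

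For the equality case, assume $\lambda(G_0)=\lambda(G)$ but $G_0\not\cong G$, so that $A(G)-B$ is a nonzero nonnegative matrix. Let $\mathbf{y}\geq 0$ be a Perron eigenvector of $B$, so $B\mathbf{y}=\lambda(G_0)\mathbf{y}=\lambda(G)\mathbf{y}$. Then $A(G)\mathbf{y}\geq B\mathbf{y}=\lambda(G)\mathbf{y}$. Pairing with the strictly positive left Perron eigenvector $\mathbf{x}$ of $A(G)$ gives
\[
\lambda(G)\,\mathbf{x}^{\mathrm{T}}\mathbf{y}=\mathbf{x}^{\mathrm{T}}A(G)\mathbf{y}\geq\mathbf{x}^{\mathrm{T}}B\mathbf{y}=\lambda(G)\,\mathbf{x}^{\mathrm{T}}\mathbf{y},
\]
and since $\mathbf{x}^{\mathrm{T}}\mathbf{y}>0$ this forces $A(G)\mathbf{y}=B\mathbf{y}$, i.e.\ $(A(G)-B)\mathbf{y}=\mathbf{0}$. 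But $A(G)\mathbf{y}=\lambda(G)\mathbf{y}$ and $\lambda(G)$ is a simple eigenvalue of the irreducible matrix $A(G)$, so $\mathbf{y}$ is a positive multiple of $\mathbf{x}$, hence strictly positive. Combined with $A(G)-B\geq 0$, the relation $(A(G)-B)\mathbf{y}=\mathbf{0}$ forces $A(G)=B$, contradicting $G_0\not\cong G$.

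\textbf{Main obstacle.} The inequality itself is routine; the only delicate step is promoting ``$\leq$'' to ``$<$'' whenever $G_0$ is a proper subgraph. The argument above handles this uniformly, covering both the edge-deletion and vertex-deletion scenarios at once by exploiting the strict positivity of the Perron eigenvector of the irreducible matrix $A(G)$, so that no case split on how $G_0$ differs from $G$ is needed.
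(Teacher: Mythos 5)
The paper does not prove this lemma; it is quoted as a known result from the reference [Bapat, \emph{Graphs and Matrices}], and your Perron--Frobenius argument is precisely the standard proof of that result. Your reasoning is correct throughout --- the only point left implicit is that $A(G)=B$ together with connectivity of $G$ rules out $V(G_0)\subsetneq V(G)$ (a zero row of $B$ would force an isolated vertex of $G$), which is immediate.
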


According to Perron-Frobenius Theorem, for the adjacency matrix $A(G) $ of a connected graph $G$, there exists a positive eigenvector $ \mathbf{x}$ corresponding to $\lambda(G)$. We use $x(v)$ to denote the corresponding entry of the eigenvector $\mathbf{x}$ for every vertex $v \in V(G)$. Then, we present two results on the Perron vector.

\begin{lemma}(\cite{LLT})\label{le:5} \ Let $G$ be a connected graph and let $u, v$ be two vertices of $G$. 
	Suppose that $v_1, v_2, \dots, v_s \in N_G(v)\setminus N_G(u)$ with $s \geq 1$, and $G^*$ is the graph obtained 
	from $G$ by deleting the edges $vv_i$ and adding the edges $uv_i$ for $1 \leq i \leq s$. Let $\mathbf{x}$ be the 
	Perron vector of $A(G)$. If $x(u) \geq x(v)$, then $\lambda(G) < \lambda(G^*)$.
\end{lemma}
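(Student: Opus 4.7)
The plan is to use the Rayleigh quotient characterization of $\lambda(G^*)$ together with the positivity of the Perron vector $\mathbf{x}$ of $A(G)$. First I will note that since $\mathbf{x}$ is a positive vector, the Rayleigh principle gives $\lambda(G^*)\ge \mathbf{x}^{T}A(G^*)\mathbf{x}/\mathbf{x}^{T}\mathbf{x}$. I will then compute the quadratic-form difference by observing that $A(G^*)-A(G)$ is supported on the positions $(u,v_i),(v_i,u)$ with entry $+1$ and on $(v,v_i),(v_i,v)$ with entry $-1$ for $1\le i\le s$, and no other entries change. This yields
$$\mathbf{x}^{T}A(G^*)\mathbf{x}-\mathbf{x}^{T}A(G)\mathbf{x}=2\bigl(x(u)-x(v)\bigr)\sum_{i=1}^{s}x(v_i).$$
Since every $x(v_i)>0$ by Perron positivity, $s\ge 1$, and $x(u)\ge x(v)$ by hypothesis, this difference is $\ge 0$, so $\lambda(G^*)\ge \lambda(G)$.

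The main obstacle is promoting this inequality to a strict one, since the bulk calculation only gives $\ge$. For this I would argue by contradiction: suppose $\lambda(G^*)=\lambda(G)$. Then $\mathbf{x}$ attains equality in the Rayleigh quotient for $A(G^*)$, so $\mathbf{x}$ is a (positive) eigenvector of $A(G^*)$ at the eigenvalue $\lambda(G^*)=\lambda(G)$. Writing the eigenvalue equation at the vertex $u$ in each of $G$ and $G^*$, and using that the hypothesis $v_i\in N_G(v)\setminus N_G(u)$ forces the disjoint-union identity $N_{G^*}(u)=N_G(u)\cup\{v_1,\dots,v_s\}$, subtraction gives
$$0=\lambda(G)x(u)-\lambda(G)x(u)=\sum_{w\in N_{G^*}(u)}x(w)-\sum_{w\in N_G(u)}x(w)=\sum_{i=1}^{s}x(v_i).$$
The right-hand side is strictly positive because $\mathbf{x}>0$ and $s\ge 1$, a contradiction. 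Hence $\lambda(G)<\lambda(G^*)$.

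Two small points deserve emphasis. First, no connectivity or structural hypothesis on $G^*$ is needed a priori: the equality assumption forces Rayleigh-extremality of the positive vector $\mathbf{x}$, which automatically promotes it to a top eigenvector of $A(G^*)$ without any irreducibility input. Second, the hypothesis $v_i\in N_G(v)\setminus N_G(u)$ is used in two places — it guarantees that each edge $uv_i$ is genuinely new to $G^*$ (not a duplication), and that each edge $vv_i$ genuinely disappears — both of which are required for the formula for $A(G^*)-A(G)$ above to be correct and for the neighborhood identity at $u$ in the contradiction step to hold.
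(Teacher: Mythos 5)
Your proof is correct: the paper itself does not prove this lemma (it is quoted from the cited reference), and your argument --- the Rayleigh-quotient comparison $\mathbf{x}^{T}(A(G^*)-A(G))\mathbf{x}=2(x(u)-x(v))\sum_{i}x(v_i)\ge 0$ followed by the equality analysis at the vertex $u$ to rule out $\lambda(G^*)=\lambda(G)$ --- is exactly the standard proof of this edge-switching lemma in the literature. The only implicit assumption is that $u\notin\{v_1,\dots,v_s\}$, which is already built into the statement since otherwise the operation would create a loop.
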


\begin{lemma}\label{le:6}\cite{TFZ}
\	Let $u, v$ be two distinct vertices of a connected graph $G$, and let $\mathbf{x}$ be the Perron vector of $A(G)$.
	\begin{enumerate}
		\item[(i)] If $N_{G}(v)\setminus\{u\} \subset N_{G}(u)\setminus\{v\}$, then $x(u) > x(v)$.
		\item[(ii)] If $N_{G}(v) \subseteq N_{G}[u]$ and $N_{G}(u) \subseteq N_{G}[v]$, then $x(u) = x(v)$.
	\end{enumerate}
\end{lemma}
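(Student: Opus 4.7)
The plan is to prove both parts by manipulating the eigenvalue equations $\lambda(G)x(u) = \sum_{w \in N_G(u)} x(w)$ and $\lambda(G)x(v) = \sum_{w \in N_G(v)} x(w)$ supplied by $A(G)\mathbf{x} = \lambda(G)\mathbf{x}$, subtracting them, and using the positivity of the Perron vector (which the Perron--Frobenius theorem guarantees because $G$ is connected). Throughout, I will split each neighborhood sum into two pieces: the contribution of the other distinguished vertex (if adjacent), and the contribution of the remaining common / symmetric-difference neighbors. This reduces everything to an identity whose sign is controlled either by a strictly positive sum or by the factor $\lambda(G)+1$.

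For part (i), I will treat the two cases $uv \in E(G)$ and $uv \notin E(G)$ separately. In the adjacent case, writing the eigenvalue equations as $\lambda(G)x(u) = x(v) + \sum_{w \in N_G(u)\setminus\{v\}} x(w)$ and $\lambda(G)x(v) = x(u) + \sum_{w \in N_G(v)\setminus\{u\}} x(w)$ and subtracting yields
\[
(\lambda(G)+1)\bigl(x(u)-x(v)\bigr) \;=\; \sum_{w \in (N_G(u)\setminus\{v\}) \,\setminus\, (N_G(v)\setminus\{u\})} x(w).
\]
The strict containment $N_G(v)\setminus\{u\} \subsetneq N_G(u)\setminus\{v\}$ makes the index set nonempty, so the right-hand side is strictly positive; since $\lambda(G)>0$, this forces $x(u)>x(v)$. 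In the non-adjacent case the same subtraction gives $\lambda(G)(x(u)-x(v)) = \sum_{w \in N_G(u)\setminus N_G(v)} x(w) > 0$, reaching the same conclusion.

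For part (ii), the hypothesis $N_G(v)\subseteq N_G[u]$ together with $N_G(u)\subseteq N_G[v]$ implies, after removing the possible element $u$ from the first containment and $v$ from the second, that $N_G(u)\setminus\{v\} = N_G(v)\setminus\{u\}$ in both the adjacent and non-adjacent subcases. If $uv\in E(G)$, then the analog of the subtraction used above gives $(\lambda(G)+1)(x(u)-x(v))=0$, and since $\lambda(G)+1>0$ we conclude $x(u)=x(v)$. If $uv\notin E(G)$, then $N_G(u)=N_G(v)$ and the two eigenvalue equations coincide term-by-term, again yielding $x(u)=x(v)$.

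The only subtlety I expect is bookkeeping around whether $u\in N_G(v)$ (equivalently $v\in N_G(u)$); the trick that makes this clean is to isolate the $x(u)$ and $x(v)$ terms before combining the common-neighbor contributions, so that they recombine as $(\lambda(G)+1)$ times the desired difference. Once that algebraic split is set up, both parts follow from a single sign check on the Perron vector entries and on $\lambda(G)$, and no further combinatorial input is needed.
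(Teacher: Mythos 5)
Your proof is correct and complete: the paper itself states this lemma without proof, importing it from the cited reference of Tam, Fan and Zhou, and your argument --- subtracting the two eigenvalue equations, isolating the mutual $x(u)$, $x(v)$ terms so the difference recombines as $(\lambda(G)+1)(x(u)-x(v))$ when $uv\in E(G)$, and then invoking positivity of the Perron entries --- is exactly the standard derivation behind that cited result. The only point worth flagging is that your reading of $\subset$ as \emph{proper} containment in part (i) is essential (otherwise one only gets $x(u)\ge x(v)$), and it is indeed the intended reading, since the paper later applies part (i) to conclude strict inequalities such as $x(w_i)>x(u_j)$.
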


Then we present a classical result concerning upper bounds for the spectral radius.

\begin{lemma}(\cite{HSF,N})\label{le:7} \ Let $G$ be a graph on $n$ vertices and $m$ edges with minimum degree $\delta \geq 1$. Then
	$$
	\lambda(G) \leq \frac{\delta - 1}{2} + \sqrt{2e(G) - n\delta + \frac{(\delta + 1)^2}{4}},
	$$
	with equality if and only if $G$ is either a $\delta$-regular graph or a bidegreed graph in which each vertex is of degree either $\delta$ or $n - 1$.
\end{lemma}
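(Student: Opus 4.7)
The plan is to apply the Perron--Frobenius machinery together with a Cauchy--Schwarz estimate on the eigenvector equation, deriving a quadratic inequality in $\lambda(G)$ whose solution yields the stated upper bound. The first step is to rewrite the claimed inequality in a cleaner equivalent form. Isolating the square root and squaring both sides, the bound reduces to
$$\lambda(G)^2 - (\delta-1)\lambda(G) \leq 2e(G) - (n-1)\delta,$$
a single quadratic inequality in $\lambda(G)$; note that the right-hand side equals $\sum_{v \in V(G)} (d(v) - \delta) + \delta$, which is nonnegative by the minimum degree hypothesis, so the target inequality is consistent with the constraint.

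Next, let $\mathbf{x}$ be the unit positive Perron eigenvector of $A(G)$, as guaranteed by the Perron--Frobenius theorem already invoked in the paragraph preceding Lemma \ref{le:5}. For each $v \in V(G)$, the componentwise eigenvalue identity $\lambda(G) x(v) = \sum_{u \sim v} x(u)$ is central. To extract the quadratic bound, I would manipulate this identity by applying a Cauchy--Schwarz estimate to the neighborhood sum, with a shift by $(\delta - 1)x(v)$ designed so that the linear term $(\delta - 1)\lambda(G)$ appears after summing. Combining the resulting per-vertex inequalities, multiplying by $x(v)$, and summing over $v \in V(G)$, one uses $\sum_v d(v) = 2e(G)$ together with the per-vertex bound $d(v) \geq \delta$ to obtain the desired quadratic inequality. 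The quadratic formula applied to $\lambda(G)^2 - (\delta - 1)\lambda(G) - \bigl(2e(G) - (n-1)\delta\bigr) \leq 0$ then produces the claimed spectral radius bound.

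The principal obstacle is the equality characterization. Tracing when each inequality in the chain is tight, the Cauchy--Schwarz step forces the entries of $\mathbf{x}$ restricted to every neighborhood to be constant, while saturating $d(v) \geq \delta$ constrains which vertices may exceed the minimum degree. Combined with the Perron positivity of $\mathbf{x}$ and the connectedness of $G$, these constraints should force $\mathbf{x}$ to take at most two distinct values, reflecting at most two distinct vertex degrees. The careful casework needed to conclude that the only extremal configurations are either $\delta$-regular graphs (all vertices of degree $\delta$ with $\mathbf{x}$ constant) or bidegreed graphs in which every vertex has degree $\delta$ or $n-1$ (with $\mathbf{x}$ taking exactly two values accordingly) will be the most delicate part of the argument and the main technical hurdle of the proof.
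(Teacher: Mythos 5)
The paper does not prove this lemma at all --- it is quoted verbatim from Hong--Shu--Fang and Nikiforov --- so there is no internal proof to compare against; your attempt has to stand on its own, and it has a genuine gap at its center. Your reduction of the bound to the equivalent quadratic inequality $\lambda(G)^2-(\delta-1)\lambda(G)\le 2e(G)-(n-1)\delta$ is correct, but the step that is supposed to establish this inequality is never carried out, and the mechanism you describe (a Cauchy--Schwarz estimate on the neighborhood sums, then multiplying by $x(v)$ and summing over $v$) is an $\ell_2$/quadratic-form argument that does not produce the global term $2e(G)-(n-1)\delta$. Concretely, with a unit eigenvector, Cauchy--Schwarz applied to $(A\mathbf{x})_v=\sum_{u\sim v}x(u)$ followed by summation yields $\lambda(G)^2\le\max_v\sum_{u\sim v}d(u)\le 2e(G)-(n-1)\delta+\Delta(\delta-1)$ with $\Delta$ the maximum degree; since $\lambda(G)\le\Delta$, this is strictly weaker than the target $\lambda(G)^2\le 2e(G)-(n-1)\delta+(\delta-1)\lambda(G)$ and cannot be upgraded to it. The ingredient you are missing is the $\ell_\infty$ normalization: the known proofs take the Perron vector with largest entry $x(u)=1$, set $\sigma=\sum_v x(v)$, and combine the two elementary inequalities $(\lambda(G)-\delta)\sigma=\sum_v(d(v)-\delta)x(v)\le 2e(G)-n\delta$ (using $d(v)\ge\delta$ and $x(v)\le 1$) and $\lambda(G)=\sum_{v\sim u}x(v)\le\sigma-1$, together with $\lambda(G)\ge 2e(G)/n\ge\delta$; multiplying the first two gives exactly the quadratic inequality. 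No Cauchy--Schwarz appears, and the quantity $n-1-d(v)$ enters only through the bound $x(v)\le 1$ on the high-degree vertices.

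The equality discussion inherits the same problem: the extremal graphs are detected by tightness of $x(v)=1$ for every vertex with $d(v)>\delta$ and by tightness of $\lambda(G)\le\sigma-1$ (which, in a connected graph with positive Perron vector, forces $d(u)=n-1$), so the ``when is Cauchy--Schwarz tight'' analysis you outline is tracking inequalities that do not occur in a working proof. Finally, the lemma is stated for arbitrary graphs with $\delta\ge 1$, while Perron positivity is only guaranteed on a component attaining $\lambda(G)$; the disconnected case needs a separate (easy) reduction that your sketch does not mention.
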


\begin{lemma}(\cite{HSF,N})\label{le:8} \ For nonnegative integers $p$ and $q$ with $2q \leq p(p - 1)$ and $0 \leq x \leq p - 1$, the function
	$$
	f(x) = \frac{x - 1}{2} + \sqrt{2q - px + \frac{(1 + x)^2}{4}}
	$$ 
	is decreasing with respect to $x$.
\end{lemma}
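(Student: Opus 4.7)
The plan is to prove $f'(x) \le 0$ on $[0, p-1]$ by a direct derivative computation. Writing $g(x) = 2q - px + (1+x)^2/4$ for the radicand, differentiation gives
$$f'(x) = \frac{1}{2} + \frac{g'(x)}{2\sqrt{g(x)}} = \frac{1}{2} + \frac{1 + x - 2p}{4\sqrt{g(x)}},$$
so the desired inequality $f'(x) \le 0$ is equivalent to $2\sqrt{g(x)} \le 2p - 1 - x$.

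Next I would observe that the constraint $0 \le x \le p-1$ forces $2p - 1 - x \ge p \ge 0$, so both sides of this target inequality are nonnegative and squaring is a reversible step. After squaring, the $(1+x)^2$ terms and the $-4px$ terms cancel between the two sides, and the inequality collapses to $4p^2 - 4p \ge 8q$, which is precisely the hypothesis $2q \le p(p-1)$. This establishes $f'(x) \le 0$ throughout $[0, p-1]$, and hence that $f$ is (weakly) decreasing in $x$.

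The only subtlety, which is the closest thing to an obstacle, is justifying that $g(x) > 0$ on $[0, p-1]$ so that $\sqrt{g(x)}$ and its derivative are well-defined. Since $g'(x) = (1+x)/2 - p$ is negative on this interval, $g$ attains its minimum at $x = p-1$, where the hypothesis $2q \le p(p-1)$ gives $g(p-1) = 2q - p(p-1) + p^2/4 \ge p^2/4 > 0$ for $p \ge 1$; the corner case $p = 0$ forces $q = 0$ and makes the domain $[0, p-1]$ empty, so the statement holds vacuously. Beyond this well-definedness check the argument is routine calculus, so I expect no real difficulty.
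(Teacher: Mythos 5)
The paper does not prove this lemma at all --- it is quoted from the literature (\cite{HSF,N}) --- so the only comparison available is with the standard argument, which is precisely the derivative computation you carry out. That core computation is correct: with $g(x)=2q-px+\frac{(1+x)^2}{4}$ one gets $f'(x)\le 0$ iff $2\sqrt{g(x)}\le 2p-1-x$, the right-hand side is at least $p\ge 0$ on $[0,p-1]$, and squaring collapses the inequality to $4p^2-4p\ge 8q$, which is the hypothesis $2q\le p(p-1)$.

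However, the ``only subtlety'' you flag is handled with a sign error, and the claim you make there is false. From $2q\le p(p-1)$ one obtains $g(p-1)=2q-p(p-1)+\frac{p^2}{4}\le\frac{p^2}{4}$, an \emph{upper} bound, not the lower bound $g(p-1)\ge\frac{p^2}{4}$ that you assert; and $g$ genuinely can be negative on part of $[0,p-1]$: taking $p=4$, $q=0$ (which satisfies $2q\le p(p-1)$) gives $g(3)=-12+4=-8<0$. The lemma therefore has to be read as asserting monotonicity wherever $f$ is defined; since $g'(x)=\frac{1+x}{2}-p<0$ throughout $[0,p-1]$, the set $\{x: g(x)\ge 0\}$ is an initial segment of the interval, and your derivative argument applies verbatim on it. In the paper's application the issue is moot anyway, because with $p=n$, $q=e(G)$, $x=\delta$ the radicand is $2e(G)-n\delta+\frac{(\delta+1)^2}{4}\ge\frac{(\delta+1)^2}{4}>0$, as $2e(G)=\sum_{v}d(v)\ge n\delta$. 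So the substance of your proof is right, but the positivity justification needs to be replaced by one of these two observations rather than the incorrect inequality.
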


\begin{figure}[htbp]
	\centering
	\begin{minipage}[c]{0.3\textwidth}
		\begin{tikzpicture}[scale =1.2]
			\node[circle,fill=black,draw=black,inner sep=2.2pt] (v1) at (-0.4,0) {};
			\node[circle,fill=black,draw=black,inner sep=2.2pt] (v2) at (-0.2,2.0) {};
			\node[circle,fill=black,draw=black,inner sep=2.2pt] (v5) at (0.8,2.0) {};
			\node[circle,fill=black,draw=black,inner sep=2.2pt] (v6) at (-1.4,0) {};
			\node[circle,fill=black,draw=black,inner sep=2.2pt] (v7) at (-1.6,2.0) {};
			\node[circle,fill=black,draw=black,inner sep=2.2pt] (v8) at (-2.6,2.0) {};
			\node[circle,fill=black,draw=black,inner sep=1pt] (v9) at (-2.1,2.0)  {};
			\node (v10) at (-0.9,-0.3) {};
			\node (v11) at (1.0,2.8) {};
			\node (v12) at (0,-1.0) {};
			\node (v13) at (-2.6,2.8) {};
			\node (v14) at (0.5,2.2) {};
			\node (v15) at (0.3,2.2) {};
			\node (v16) at (0.1,2.2) {};
			\node (v17) at (-0.8,3.0) {};
			\node (v18) at (-1.9,2.2)  {};
			\node (v19) at (-2.1,2.2) {};
			\node (v20) at (-2.3,2.2) {};
			\node[below] at (v1) { };
			\node[left] at (v2) { };
			\node[left] at (v5) { };
			\node[below] at (v6) { };
			\node[right] at (v7) { };
			\node[right] at (v8) { };
			\node[below] at (v10) {\large$K_{a+k}$};    
			\node[overlay] at (v11) {\large$K_{n-(a+b+k+1)}$};   
			\node[overlay] at (v13) {\large$(b+1)K_1$};  
			\node[overlay] at (v17) {\large$a-1$};   
			\draw [line width=1.2pt](v1) -- (v2);
			\draw [line width=1.2pt](v6) -- (v2);
			\draw [line width=1.2pt](v5) -- (v1);
			\draw [line width=1.2pt](v5) -- (v6);
			\draw[line width=1.2pt] (v6) -- (v7);
			\draw [line width=1.2pt](v6) -- (v8);
			\draw [line width=1.2pt](v7) -- (v1);
			\draw [line width=1.2pt](v8) -- (v1);
			\draw [line width=1.2pt](-0.9,0) ellipse (0.65cm and 0.17cm);
			\draw [line width=1.2pt](0.3,2.0) ellipse (0.65cm and 0.17cm);
			\draw[line width=1.2pt] (v20) to[bend left=40] (v14);  
			\draw[line width=1.2pt] (v19) to[bend left=35] (v15);  
			\draw[line width=1.2pt] (v18) to[bend left=30] (v16);  
			\fill (-0.7,0) circle (1pt); 
			\fill (-0.9,0) circle (1pt); 
			\fill (-1.1,0) circle (1pt); 
			\fill (0.1,2.0) circle (1pt); 
			\fill (0.3,2.0) circle (1pt); 
			\fill (0.5,2.0) circle (1pt); 
			\fill (-1.9,2.0) circle (1pt); 
			
			\fill (-2.3,2.0) circle (1pt); 
		\end{tikzpicture}
	\end{minipage}
	\caption{Graphs $\mathscr{F}_{n}^{a,b,k}$ } 
	\label{figure:bga2} 
	\end{figure}
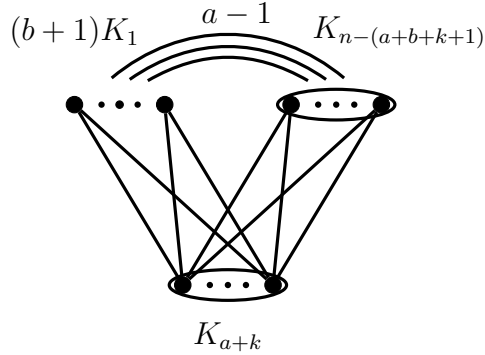

We use $ \mathscr{F}_{n}^{a,b,k}$ to denote the set of graphs obtained from
$K_{a+k} \vee \left( K_{n-(a+b+k+1)} \cup (b+1)K_1 \right)$ by adding $a-1$ edges between $V((b+1)K_1)$ and $V(K_{n-(a+b+k+1)})$($\mathscr{F}_{n}^{a,b,k}$ is shown in Figure 2). Clearly, $F_{n}^{a,b,k}\in \mathscr{F}_{n}^{a,b,k}$.  We will prove that $F_{n}^{a,b,k}$ is the graph with the maximum spectral radius in $\mathscr{F}_{n}^{a,b,k}$.

\begin{lemma}\label{le:9}
	Let $a$ and $b$ be two positive integers with $b \ge a\ge 1$. If $G \in \mathscr{F}_{n}^{a,b,k}$ and $n \geq \frac{1}{2}(4a+2b+ab+(b+2)k)+1$, then
	$$n - b - 2 < \lambda(G) < n - b - 1.$$
\end{lemma}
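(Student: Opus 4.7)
I would prove the two inequalities separately.

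For the lower bound, observe that the vertex set $V(K_{a+k})\cup V(K_{n-(a+b+k+1)})$, of cardinality $(a+k)+(n-a-b-k-1)=n-b-1$, induces a clique in $G$, because every vertex of $K_{a+k}$ is joined to every vertex of $K_{n-(a+b+k+1)}$ in the construction of $\mathscr{F}_{n}^{a,b,k}$. Hence $K_{n-b-1}$ is a proper subgraph of $G$ (since $|V(G)|=n>n-b-1$), and Lemma \ref{le:4} applied in its strict case yields $\lambda(G)>\lambda(K_{n-b-1})=n-b-2$.

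For the upper bound I would first tally the edges. Using the identity $\binom{a+k}{2}+\binom{n-a-b-k-1}{2}+(a+k)(n-a-b-k-1)=\binom{n-b-1}{2}$, every graph $G\in\mathscr{F}_{n}^{a,b,k}$ satisfies
$$e(G) \;=\; \binom{n-b-1}{2} + (a+k)(b+1) + (a-1).$$
Next I would argue $\delta(G)=a+k$: each vertex of $V((b+1)K_1)$ has degree at least $a+k$ from the join with $K_{a+k}$, and since only $a-1\le b$ extra edges are attached to the $b+1$ vertices of $V((b+1)K_1)$, at least $b-a+2\ge 2$ of them remain at degree exactly $a+k$. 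Plugging $\delta=a+k$ into Lemma \ref{le:7}, it suffices to establish
$$2e(G) - n(a+k) + \frac{(a+k+1)^2}{4} \;<\; \left(\frac{2n-2b-a-k-1}{2}\right)^{\!2},$$
after isolating the square root and squaring (both sides being positive under the hypothesis on $n$).

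I would then substitute the formula for $e(G)$ and expand. The quadratic-in-$n$ terms cancel, and the identity $(a+k+1)^{2}-(2b+a+k+1)^{2}=-4b(a+b+k+1)$ collapses the constant and linear part so that the difference (LHS$-$RHS) reduces to $-2n+4a+2b+ab+(b+2)k$. Negativity of this expression is precisely $n>\tfrac{1}{2}(4a+2b+ab+(b+2)k)$, which is guaranteed by the hypothesis $n\ge \tfrac{1}{2}(4a+2b+ab+(b+2)k)+1$. This gives $\lambda(G)<n-b-1$ and completes the proof. The only nontrivial step is the algebraic bookkeeping in the final expansion; everything else is a direct application of Lemmas \ref{le:4} and \ref{le:7}.
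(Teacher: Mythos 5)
Your proposal is correct and follows essentially the same route as the paper: the lower bound via the clique $K_{n-b-1}$ and Lemma \ref{le:4}, and the upper bound via the edge count together with Lemma \ref{le:7}, with the final inequality reducing (as you computed) to $-2n+4a+2b+ab+(b+2)k<0$. The only cosmetic difference is that you pin down $\delta(G)=a+k$ exactly, whereas the paper uses $\delta(G)\ge a+k$ combined with the monotonicity in Lemma \ref{le:8} and completes the square under the radical rather than squaring both sides; both are valid.
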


\begin{proof}[\bf Proof of Lemma~\ref{le:9}]  %
	It is easy to see that $K_{n-b-1}$ is a proper subgraph of $G$. By Lemma \ref{le:4}, we have

	$$\lambda(G) > \lambda(K_{n-b-1}) = n - b - 2.$$

	Note that $G \in \mathscr{F}_{n}^{a,b,k}$, then
	
	$$e(G) = \binom{n-b-1}{2} + (a+k)(b+1) + a - 1= \binom{n-b-1}{2}+ab+2a+ (b+1)k-1.$$

	Since $n \geq \frac{1}{2}(4a+2b+ab+(b+2)k)+1$, $b\ge a\ge1$ and $\delta(G)\ge a+k$, by Lemmas \ref{le:7} and \ref{le:8}, we obtain
	
	\begin{align*}
		 \lambda(G)& \leq \frac{\delta(G) - 1}{2} + \sqrt{2e(G) - n\delta(G) + \frac{(\delta(G) + 1)^2}{4}}\\
		&\leq \frac{a+k-1}{2} + \sqrt{2(\binom{n-b-1}{2}+ab+2a+ (b+1)k-1)- n(a+k) + \frac{(a+k+1)^2}{4}} \\
		&= \frac{a+k-1}{2} + \sqrt{\left(n - b - \frac{a+k+1}{2}\right)^2 - (2n-4a-2b-ab-(b+2)k)} \\
		&< \frac{a+k-1}{2} + \left(n - b - \frac{a+k+1}{2}\right) \quad (\text{since } n \geq \frac{1}{2}(4a+2b+ab+(b+2)k)+1)  \\
		&= n - b - 1.
	\end{align*}

	This completes the proof.
\end{proof}

\begin{lemma}\label{le:10}
	Let $a$, $b$ and $k$ be positive integers and $b\ge a\ge1$. If $G \in \mathscr{F}_{n}^{a,b,k}$ and $n \geq \frac{1}{2}(4a+2b+ab+(b+2)k)+2$, then
	$$\lambda(G) \leq \lambda(F_{n}^{a,b,k}),$$
	with equality if and only if $G \cong F_{n}^{a,b,k}$($F_{n}^{a,b,k}$ is shown in Figure 1).
\end{lemma}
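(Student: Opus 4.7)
Write $A = V(K_{a+k})$, $B = V(K_{n-(a+b+k+1)})$, $C = V((b+1)K_1)$, so that any $G \in \mathscr{F}_n^{a,b,k}$ differs from the base graph $K_{a+k}\vee(K_{|B|}\cup (b+1)K_1)$ only in the $a-1$ ``extra'' edges between $C$ and $B$. Let $\mathbf{x}$ be the Perron vector of $G$ and choose $c^* \in C$ with $x(c^*) = \max_{c\in C}x(c)$; also write $N_B(c) := N_G(c)\cap B$ and $d(c):=|N_B(c)|$. My plan has two phases: an edge-shift reduction via Lemma~\ref{le:5}, followed by a Rayleigh comparison with $F_n^{a,b,k}$ that uses the large-$n$ hypothesis.

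For the first phase, consider any extra edge $c'w$ with $c'\in C\setminus\{c^*\}$ and $w\in B\setminus N_B(c^*)$. Then $w \in N_G(c')\setminus N_G(c^*)$ and $x(c^*) \ge x(c')$, so Lemma~\ref{le:5} (with $u=c^*$, $v=c'$, $v_1=w$) gives $\lambda(G-c'w+c^*w) > \lambda(G)$, and the resulting graph remains in $\mathscr{F}_n^{a,b,k}$. Iterating, one reaches $\widetilde{G}\in\mathscr{F}_n^{a,b,k}$ with $\lambda(\widetilde{G})\ge \lambda(G)$ such that every remaining extra edge $c'w$ with $c'\ne c^*$ has $w\in N_B(c^*)$. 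If all $a-1$ extra edges of $\widetilde{G}$ are incident to $c^*$ then (by simplicity) they go to $a-1$ distinct $B$-vertices, so $\widetilde{G}\cong F_n^{a,b,k}$ and the inequality is equality.

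Otherwise some $c'\ne c^*$ still has $d(c')\ge 1$ with $N_B(c')\subseteq N_B(c^*)$. Write $s=d(c^*)$, pick $W_{\mathrm{new}}\subseteq B\setminus N_B(c^*)$ of size $a-1-s$ whose vertices have no other $C$-neighbors in $\widetilde{G}$, and align $F_n^{a,b,k}$ so that its star at $c^*$ covers $N_B(c^*)\cup W_{\mathrm{new}}$. Letting $\widetilde{\mathbf{x}}$ be the Perron vector of $\widetilde{G}$ and writing $\alpha := \widetilde{x}(v)$ for $v\in A$, the Rayleigh quotient gives
\[
\lambda(F_n^{a,b,k}) - \lambda(\widetilde{G}) \;\ge\; \frac{2}{\|\widetilde{\mathbf{x}}\|^2}\Bigl[\widetilde{x}(c^*)\!\!\sum_{w'\in W_{\mathrm{new}}}\!\!\widetilde{x}(w') \;-\;\!\!\sum_{\substack{c'w\,\mathrm{extra}\\ c'\ne c^*}}\!\!\widetilde{x}(c')\widetilde{x}(w)\Bigr].
\]
I would then use the eigenequations $\lambda\widetilde{x}(c) = (a+k)\alpha+\sum_{w\in N_B(c)}\widetilde{x}(w)$ and $(\lambda+1)\widetilde{x}(w) = (a+k)\alpha+\sum_{w''\in B}\widetilde{x}(w'') + \sum_{c\in N_C(w)}\widetilde{x}(c)$ to express $\widetilde{x}(c^*)-\widetilde{x}(c')$ as $\tfrac{1}{\lambda}\sum_{w\in N_B(c^*)\setminus N_B(c')}\widetilde{x}(w)$ and $\widetilde{x}(w)-\widetilde{x}(w')$ as a corresponding ratio, and invoke Lemma~\ref{le:9} ($\widetilde{\lambda} > n-b-2$) together with $n\ge \tfrac12(4a+2b+ab+(b+2)k)+2$ to isolate the dominant positive term and conclude strict positivity of the bracket.

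\textbf{The main obstacle} is precisely this sign check in the ``twin'' sub-case where $N_B(c')=N_B(c^*)$: by Lemma~\ref{le:6}(ii) one has $\widetilde{x}(c^*)=\widetilde{x}(c')$, while twin-hit $B$-vertices have strictly larger Perron entries than fresh ones, so the bracketed Rayleigh expression is at leading order \emph{negative}. To handle this, I would refine the comparison by writing out the equitable-partition quotient matrices of $\widetilde{G}$ and $F_n^{a,b,k}$ (of dimension at most $6$ since both share the same base) and showing that the characteristic polynomial of $F_n^{a,b,k}$'s quotient satisfies $p_{F_n^{a,b,k}}(\lambda(\widetilde{G}))<0$; because $p_{F_n^{a,b,k}}(\lambda)>0$ for $\lambda > \lambda(F_n^{a,b,k})$, this forces $\lambda(\widetilde{G}) < \lambda(F_n^{a,b,k})$. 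It is in this polynomial evaluation that the explicit lower bound on $n$ from the hypothesis is most critically used, supplying the gap that absorbs the negative Rayleigh contribution.
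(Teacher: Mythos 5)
Your Phase~1 (shifting extra edges whose $B$-endpoint lies outside $N_B(c^*)$ onto $c^*$ via Lemma~\ref{le:5}) is sound and parallels the paper's first step, which uses the maximality of $\lambda(G)$ together with Lemma~\ref{le:5} to force nested neighborhoods $N_G(t_{j+1})\subseteq N_G[t_j]$ and consecutive initial segments $N_W(t_1)=\{w_1,\dots,w_{p_1}\}$. The genuine gap is in Phase~2. You correctly diagnose that the one-vector Rayleigh comparison fails in the residual configurations (when $N_B(c')\subseteq N_B(c^*)$ with $c'\ne c^*$, Lemma~\ref{le:6} can give $\widetilde{x}(c')=\widetilde{x}(c^*)$ while the hit $B$-vertices have strictly larger entries than the fresh ones, so your bracket is negative at leading order), but the repair you propose does not close the argument. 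First, the claim that $\widetilde{G}$ admits an equitable partition ``of dimension at most $6$'' is false in general: the $a-1$ extra edges may be spread over several vertices of $C$ with overlapping and non-overlapping targets in $B$, and the coarsest equitable partition must separate $B$-vertices by their full incidence pattern to the distinct $C$-vertices carrying extra edges; its size is bounded only in terms of $a$ and grows with the number of such configurations. Second, even granting a manageable quotient, evaluating $p_{F_n^{a,b,k}}$ at $\lambda(\widetilde{G})$ and certifying its sign requires a quantitative handle on $\lambda(\widetilde{G})$ and on the relevant eigenvector ratios --- which is precisely the content you have deferred, not supplied.

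The paper resolves exactly this difficulty by a two-sided comparison: with $\mathbf{x}$ the Perron vector of $G$ and $\mathbf{y}$ the Perron vector of $G_0\cong F_n^{a,b,k}$, it computes $\mathbf{y}^{T}(A(G_0)-A(G))\mathbf{x}=(\lambda_0-\lambda)\,\mathbf{y}^{T}\mathbf{x}$ exactly, and then uses the eigenvalue equations of \emph{both} graphs to produce explicit ratio bounds --- a lower bound on $x(w_{p_1+1})/x(w_1)$ in terms of $\lambda$, $q$, $p_1$, and a closed form for $y(t_1)/y(t_2)$ as a rational function of $\lambda_0$ --- and finally verifies the resulting polynomial inequality $f(\lambda,\lambda_0)\ge 0$ using Lemma~\ref{le:9} ($n-b-2<\lambda,\lambda_0<n-b-1$) and the hypothesis $n\ge\tfrac12(4a+2b+ab+(b+2)k)+2$. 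Some version of these explicit two-vector estimates is unavoidable; without them your argument stops exactly at the sub-case you flag as the main obstacle, so the proposal as written does not prove the lemma.
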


\begin{proof}[\bf Proof of Lemma~\ref{le:10}]  %
	Let $G$ denote the graph achieving the maximum spectral radius in $\mathscr{F}_{n}^{a,b,k}$. We divide the vertex set $V(G)$ into three subsets: $V(G)=S\cup T\cup W,$ where $S=V(K_{a+k})=\{u_{1},u_{2},\ldots,u_{a+k}\}$, $W=V(K_{n-(a+b+k+1)})=\{w_{1},w_{2},\ldots,w_{n-(a+b+k+1)}\}$ and 
	$T=V(G)\setminus(S\cup W)=\{t_{1},t_{2},\ldots,t_{b+1}\}$. 
	We use $\mathbf{x}$ to denote the Perron vector of $A(G)$, and let $\lambda=\lambda(G)$. 
	Without loss of generality, suppose that $x(w_{i})\geq x(w_{i+1})$ and 
	$x(t_{j})\geq x(t_{j+1})$ for $1\leq i\leq n-a-b-k-2$ and $1\leq j\leq b$. Then we have $N_{G}(w_{i+1})\subseteq N_{G}[w_{i}]$ for $1\leq i\leq n-a-b-k-2$. Otherwise, there exist $i<j$ such that 
	$N_{G}(w_{j})\nsubseteq N_{G}[w_{i}]$. 
	Let $v \in N_{G}(w_{j})\setminus N_{G}[w_{i}]$ and let $G^{*} = G - vw_{j} + vw_{i}$. 
	Clearly, $G^{*} \in \mathscr{F}_{n}^{a,b,k}$. Since $x(w_{i}) \geq x(w_{j})$, by Lemma \ref{le:5}, we obtain that 
	$\lambda(G^{*}) > \lambda(G)$, which contradicts the maximality of $\lambda(G)$. Hence $N_{G}(w_{i+1}) \subseteq N_{G}[w_{i}]$ for $1 \leq i \leq n-a-b-k-2$. 
	Similarly, we have $N_{G}(t_{j+1}) \subseteq N_{G}[t_{j}]$ for $1 \leq j \leq b$. Let $d_{W}(t_{1}) = p_{1}$, $d_{W}(t_{2}) = p_{2}$ and $d_{T}(w_{1}) = q$. By the maximality of $\lambda(G)$ and Lemma \ref{le:5}, we obtain $N_{W}(t_{1}) = \{w_{1}, w_{2}, \ldots, w_{p_{1}}\}$.  Otherwise, there exists a vertex $w_p\in N_{W}(t_1)$ and  a vertex $w_k\notin N_{W}(t_1)$, where $p\ge p_1+1$ and $1\le k\le p_1$. Recall that $x(w_1) \geq x(w_2) \geq \cdots \geq x(w_{n-a-b-k-1})$, we set $G_1=G-t_1w_p+t_1w_k$. Then $G_1 \in \mathscr{F}_{n}^{a,b,k}$ and $\lambda(G_1)>\lambda(G)$ due to Lemma \ref*{le:5}, which contradicts the maximality of $\lambda(G)$. Similarly, we have 
	$N_{W}(t_{2}) = \{w_{1}, w_{2}, \ldots, w_{p_{2}}\}$ and 
	$N_{T}(w_{1}) = \{t_{1}, t_{2}, \ldots, t_{q}\}$. 
	
	{\bf Case 1.} $p_{1} = a-1$ or $q = 1$.
	
	In this case, note that there are $a-1$ edges between $V((b+1)K_1)$ and $V(K_{n-(a+b+k+1)})$. If $p_{1} = a-1$, it indicates that all $a-1$ edges between $t_1$ in $T$ and $a - 1$ vertices in $W$. Thus, we have $G \cong F_{n}^{a,b,k}$, as required. If $q = 1$, we have $N_{T}(w_{1}) = \{t_{1}\}$. Since $N_{G}(w_{i+1}) \subseteq N_{G}[w_{i}]$ for $1 \leq i \leq n-a-b-k-2$, we have $N_{T}(w_{2}) =N_{T}(w_{3})=\cdots=N_{T}(w_{a-1})= \{t_{1}\}$. Hence $p_{1} = a-1$ and $G \cong F_{n}^{a,b,k}$, as required. 
	
	{\bf Case 2.}  $p_{1} \leq a-2$ and $q \geq 2$. 
	
	In this case, note that $x(w_{i}) = x(w_{p_{1}+1})$ for $p_{1}+2 \leq i \leq n-a-b-k-1$, 
	$x(u_{j}) = x(u_{1})$ for $2 \leq j \leq a+k$. Let $\lambda = \lambda(G)$.
	
	By $A(G)\mathbf{x} = \lambda \mathbf{x}$, we obtain
	\begin{equation}\label{eq1}
		\lambda x(w_{1})=(a+k)x(u_{1})+\sum_{2\leq i\leq p_{1}}x(w_{i})+(n-a-b-k-p_{1}-1)x(w_{p_{1}+1})+\sum_{1\leq i\leq q}x(t_{i}),
	\end{equation}
	
	\begin{equation}\label{eq2}
	\lambda x(w_{p_{1}+1})=(a+k)x(u_{1})+\sum_{1\leq i\leq p_{1}}x(w_{i})+(n-a-b-k-p_{1}-2)x(w_{p_{1}+1}),
	\end{equation}
	
	\begin{equation}\label{eq3}
		\lambda x(t_{1})=(a+k)x(u_{1})+\sum_{1\leq i\leq p_{1}}x(w_{i}).
	\end{equation}
	
	By (\ref{eq2}) and (\ref{eq3}), we have
	
\begin{equation}\label{eq4}
	x(t_{1})=\frac{\lambda-(n-a-b-k-p_{1}-2)}{\lambda}x(w_{p_{1}+1}).
\end{equation}
	
	Combining this with (\ref{eq1}), (\ref{eq2}), Lemma \ref{le:9} and $ x(t_{i})\leq x(t_{1}) $ for $ 2\leq i\leq q $, we have
	
	\begin{align*}
		(\lambda+1)x(w_1) &= (\lambda+1)x(w_{p_1+1}) + \sum_{1\leq i\leq q} x(t_i) \\
		&\leq (\lambda+1)x(w_{p_1+1}) + qx(t_1) \quad (\text{since } x(t_1) \geq x(t_i) \text{ for } 2\leq i\leq q) \\
		&= \frac{\lambda(\lambda+1)+q(\lambda-(n-a-b-k-p_1-2))}{\lambda} x(w_{p_1+1}) \quad (\text{due to (\ref{eq4})}) \\
		&< \frac{\lambda(\lambda+1)+q(a+p_1+k+1)}{\lambda} x(w_{p_1+1}) \quad (\text{since } \lambda < n-b-1).
	\end{align*}
	
Therefore,
	
\begin{equation}\label{eq5}
x(w_{p_{1}+1}) > \frac{\lambda(\lambda+1)}{\lambda(\lambda+1)+q(a+p_1+k+1)} x(w_{1}).
\end{equation}

We construct $F_{n}^{a,b,k}$ by deleting the edges $t_iw_j$($i\ge2$) and connecting these vertices $w_j$ to $t_1$. Suppose that $E_{1} = \{t_{1}w_{i} \mid p_{1}+1 \leq i \leq a-1\}$ and 
	$E_{2} = \{t_{i}w_{j} \in E(G) \mid  2\le i\le q,  1\le j \le p_{2}\}$. 
	Let $G_0 = G - E_{2} + E_{1}$. Then $G_0 \cong F_{n}^{a,b,k}$. 
	We use $\mathbf{y}$ to denote the Perron vector of $A(G_0)$, and let $\lambda_0 = \lambda(G_0)$.
	Note that $y(w_{i}) = y(w_{1})$ for $2 \leq i \leq a-1$, 
	$y(w_{i}) = y(w_{a})$ for $a+1 \leq i \leq n-a-b-k-1$,
	$y(u_{i}) = y(u_{1})$ for $2 \leq i \leq a+k$ and 
	$y(t_{i}) = y(t_{2})$ for $3 \leq i \leq b+1$. 
	By $A(G_0)\mathbf{y} = \lambda_0\mathbf{y}$, we obtain
\begin{equation}\label{eq6}
		\lambda_0 y(t_1)=  (a+k) y(u_1)+(a-1)y(w_1) 
\end{equation}
\begin{equation}\label{eq7}
\lambda_0 y(t_2)=(a+k) y(u_1)
	\end{equation}
\begin{equation}\label{eq8}
	\lambda_0 y(w_1)=(a+k) y(u_1)+(a-2)y(w_1)+(n-2a-b-k)y(w_a)+y(t_1)
		\end{equation}
\begin{equation}\label{eq9}
	\lambda_0 y(w_a)=(a+k) y(u_1)+(a-1)y(w_1)+(n-2a-b-k-1)y(w_a)
			\end{equation}
	
	Putting (\ref{eq6}) into (\ref{eq9}), we obtain
	$$y(w_{a}) = \frac{\lambda_0}{\lambda_0 - (n-2a-b-k-1)} y(t_{1}).$$
	
	By integrating this with (\ref{eq7}) and (\ref{eq8}), we have
	$$(\lambda_0 - (a-2))y(w_{1}) = \lambda_0 y(t_{2})+\frac{(n-2a-b-k)\lambda_0}{\lambda_0 - (n-2a-b-k-1)} y(t_{1}) + y(t_{1}) .$$
	
Hence,
	\begin{equation}\label{eq10}
		y(w_{1})=\frac{\lambda_0}{\lambda_0-a+2}y(t_{2})+\frac{(n-2a-b-k+1)\lambda_0-(n-2a-b-k-1)}{(\lambda_0-(n-2a-b-k-1))(\lambda_0-a+2)}y(t_{1}). 
	\end{equation}
		
	Putting (\ref{eq7}) and (\ref{eq10}) into (\ref{eq6}), we obtain
	\begin{equation}\label{eq11}
		y(t_{1})=\frac{\lambda_0(\lambda_0+1)(\lambda_0-(n-2a-b-k-1))}{ g(\lambda_0)}y(t_{2}),
	\end{equation}
	where $g(\lambda_0)=\lambda_0^3-(n-a-b-k-3)\lambda_0^2-(n-b-k-3)\lambda_0+(a-1)(n-2a-b-k-1).$

	Recall that $E_{1} = \{t_{1}w_{i} \mid p_{1}+1 \leq i \leq a-1\}$ and 
	$E_{2} = \{t_{i}w_{j} \in E(G) \mid  2\le i\le q,  1\le j \le p_{2}\}$. Since $x(t_{1})\geq x(t_{i})$ for $2\leq i\leq b+1$, $x(w_{1})\geq x(w_{j})$ for $2\leq j\leq p_{1}$, by (\ref{eq5}) and (\ref{eq11}), we get 
	\begin{align*}
		& \mathbf{y}^{T}(\lambda_0-\lambda)\mathbf{x} \\
		&= \mathbf{y}^{T}(A(G_0)-A(G))\mathbf{x}\\
		&= \sum_{t_{1}w_{i}\in E_{1}}(x(t_{1})y(w_{i})+x(w_{i})y(t_{1}))-\sum_{t_{i}w_{j}\in E_{2}}(x(t_{i})y(w_{j})+x(w_{j})y(t_{i})) \\
		&\geq (a-1-p_{1})(x(t_{1})y(w_{1})+x(w_{p_{1}+1})y(t_{1})-x(t_{2})y(w_{1})-x(w_{1})y(t_{2})) \quad (\text{since } p_{1}\leq a-2) \\
		&\geq (a-1-p_{1})(x(w_{p_{1}+1})y(t_{1})-x(w_{1})y(t_{2})) \quad (\text{since } x(t_{1})\geq x(t_{2})) \\
		&>
		(a-1-p_{1})x(w_{1})y(t_{2})  \\
		&\quad \cdot \bigg(\frac{\lambda(\lambda+1)}{\lambda^{2}+\lambda+q(a+p_{1}+k+1)}\cdot\frac{\lambda_0(\lambda_0+1)(\lambda_0-(n-2a-b-k-1))}{g(\lambda_0)}-1\bigg)  \\
		&\quad (\text{by (\ref{eq5}) and (\ref{eq11})}) \\
		&= \frac{(a-1-p_{1})}{(\lambda^{2}+\lambda+q(a+p_{1}+k+1))g(\lambda_0)}x(w_{1})y(t_{2})\cdot f(\lambda,\lambda_0),
	\end{align*}
		where $g(\lambda_0)=\lambda_0^3-(n-a-b-k-3)\lambda_0^2-(n-b-k-3)\lambda_0+(a-1)(n-2a-b-k-1)\\ =\lambda_0^3-(n-b-k-3)(\lambda_0^2+\lambda_0)+a\lambda_0^2+(a-1)(n-2a-b-k-1)$ and
	{\small	
		\begin{align*}
			f(\lambda,\lambda_0) &= \lambda(\lambda+1)\lambda_0(\lambda_0+1)(\lambda_0-(n-2a-b-k-1))
			 -[(\lambda^{2}+\lambda+q(a+p_{1}+k+1))\cdot \\
			&\quad (\lambda_0^3-(n-b-k-3)(\lambda_0^2+\lambda_0)+a\lambda_0^2+(a-1)(n-2a-b-k-1))] \\
			&= (a-1)(\lambda^{2}\lambda_0^2+\lambda\lambda_0^{2}+2\lambda^{2}\lambda_0+2\lambda\lambda_0-(n-2a-b-k-1)(\lambda^{2}+\lambda))  -q(a+p_{1}+k+1)\lambda_0^3 \\
			&\quad +q(a+p_{1}+k+1)((n-a-b-k-3)\lambda_0^2+(n-b-k-3)\lambda_0-(a-1)(n-2a-b-k-1)) \\
			&\geq (a-1)(\lambda^{2}\lambda_0^2+\lambda\lambda_0^{2}+2\lambda^{2}\lambda_0+2\lambda\lambda_0-(n-2a-b-k-1)(\lambda^{2}+\lambda)) \\
			&\quad -(a-1)(a+p_{1}+k+1)\lambda_0^3  \\
			&\quad (\text{since } a\geq 1,\, 2\leq q\leq a-1,\, n \geq \frac{1}{2}(4a+2b+ab+(b+2)k)+2,\,\lambda_0>n-b-2 \text{ and } p_{1}\geq 0)\\
			&> (a-1)[(\lambda^{2}-(a+p_{1}+k+1)\lambda_0)\lambda_0^{2}+(2\lambda_0-(n-2a-b-k-1))(\lambda^{2}+\lambda)] \\
			&> (a-1)[(\lambda^{2}-(a+p_{1}+k+1)\lambda_0)\lambda_0^{2}] \\
			&\quad (\text{since }  n \geq \frac{1}{2}(4a+2b+ab+(b+2)k)+2,\,\lambda_0>n-b-2 )\\
			&\geq (a-1)[(n-b-2)^{2}-(a+p_{1}+k+1)(n-b-1))\lambda_0^2] \\
			&\quad (\text{since } \lambda>n-b-2,\, p_{1}\leq a-2 \text{ and } n-b-2<\lambda_0<n-b-1) \\
			&\geq 0 \quad (\text{since } b\ge a\geq 1,\, p_{1}\leq a-2,\,  \text{ and } n \geq \frac{1}{2}(4a+2b+ab+(b+2)k)+2).
		\end{align*}
	}
	Hence $f(\lambda,\lambda_0)\ge0$. It is simple to verify that $g(\lambda_0)>0$ due to $n \geq \frac{1}{2}(4a+2b+ab+(b+2)k)+2$, 
	which implies that $\lambda_0>\lambda.$ This contradicts the maximality of $\lambda$.
	
	Based on the above process, we can obtain that $G \cong F_{n}^{a,b,k}$. This completes the proof
\end{proof}

\section{Proofs of Theorems \ref{T1} and \ref{T2}}

In this section, we first prove Theorem \ref{T1}, which characterizes the spectral radius conditions for a graph to be $(a,b,k)$-critical.
\begin{proof}[\bf Proof of Theorem~\ref{T1}]  %
	
By contradiction, suppose that $G$ achieves the maximal spectral radius among all connected graphs that are not $(a,b,k)$-critical, where $b> a\ge1$ and $k\ge0$. By Lemma \ref{le:2}, there exists $S \subseteq V(G)$ satisfying $|S|$ as large
as possible such that $a|T| - \sum_{v \in T} d_{G-S}(v) \geq b|S|- bk+1,$
where $|S|\ge k$ and $T = \{x \in V(G) \setminus S \mid d_{G-S}(x) \leq a-1\}$. For simplicity, let $|T| = t$ and $|S| = s$. Then 
\begin{equation}\label{eq12}
	 \sum_{v \in T} d_{G-S}(v) \le bk- 1-bs+at,
\end{equation} 

Now we prove three claims:

{\bf Claim 1.} $s\ge k+1.$

{\bf Proof.} Otherwise, $s=k$. By (\ref{eq12}) and $\delta(G) \geq a+k$, we obtain $d_{G-S}(v) \geq \delta(G) - s \geq a+k - s=a$ for any $v \in T$ and
$$
at \leq \sum_{v \in T} d_{G-S}(v) \leq bk-1-bs+at=at-1,
$$
a contradiction. Hence $s \geq k+1$.$\quad\Box$

{\bf Claim 2.} $t\ge b+1.$

{\bf Proof.} Note that $\delta(G) \geq a+k$ and $d_{G-S}(v) \geq \delta(G) - s \geq a+k - s$ for any $v \in T$. By (\ref{eq12}), then
$$
(a+k-s)t \leq \sum_{v \in T} d_{G-S}(v) \leq bk-1-bs+at.
$$
 Therefore, $t \geq b + \frac{1}{s-k}$. Since $t$ is a positive integer and $s \geq k+1$ due to Claim 1. Hence $t \geq b + 1$.$\quad\Box$

{\bf Claim 3.} $s\le t+k-1$.

{\bf Proof.}  If $s \geq t+k$, by \eqref{eq12} and $b>a\ge 1$,
$$
0 \leq \sum_{v \in T} d_{G-S}(v) \leq bk-1-bs+at \leq bk-1-b(t+k)+at=-1+(a-b)t< -1,
$$
a contradiction. Hence, $s \leq t+k-1$.$\quad\Box$

Recall that $F_{n}^{a,b,k}$ is not $(a,b,k)$-critical and $K_{n-b-1}$ is a proper subgraph of $F_{n}^{a,b,k}$. According to the maximality of $\lambda(G)$, we have
\begin{equation}\label{eq13}
	\lambda(G) \geq \lambda(F_{n}^{a,b,k}) > \lambda(K_{n-b-1}) = n - b - 2.
\end{equation}

Again by the maximality of $\lambda(G)$ and Lemma \ref{le:4}, we can deduce that $G[V(G)\setminus T] \cong K_{n-t}$ and $G[S,T] \cong K_{s,t}$. By (\ref{eq12}), we get
\begin{equation}\label{eq14}
	\begin{aligned}
		e(G)&=e(G-S-T, T)+e(T) +e(S, T) + e(G - T)\\ 
		&\leq \sum_{v \in T} d_{G-S}(v) + e(S, T) + e(G - T) \\
		&\leq bk-1-bs+at + st + \binom{n - t}{2} \\
		&= bk-1-bs+at + st + \frac{(n - t)(n - t - 1)}{2}.
	\end{aligned}
\end{equation}

We analyze the following two cases according to the value of $t$.

{\bf Case 1.} $t\ge\frac{n}{b+k+2}$

In this case, we have $t\ge\frac{n}{b+k+2}\ge 2(b+a+k+2)$ due to $n\ge 2(b+a+k+2)(b+k+2).$ By integrating Lemmas \ref{le:7} and \ref{le:8}, (\ref{eq14}) and $\delta(G) \geq a+k$, we get

{\footnotesize \begin{align*}
	\lambda(G) &\leq \frac{\delta(G) - 1}{2} + \sqrt{2e(G) - n\delta(G) + \frac{(\delta(G) + 1)^2}{4}} \\
	&\leq \frac{a+k-1}{2} + \sqrt{2e(G) - n(a+k) + \frac{(a+k+1)^2}{4}} \\
	&\leq \frac{a+k-1}{2} + \sqrt{2\left(bk-1-bs+at + st + \frac{(n - t)(n - t - 1)}{2}\right) - n(a+k) + \frac{(a+k+1)^2}{4}} \\
	&=\frac{a+k-1}{2}+ \\
	&\sqrt{\left(n-b-\frac{a+k+3}{2}\right)^2 - \left(2(t-b-1)n-t^2-(2s+2a+1)t+b^2+b(2s+k+3)+ab+a-3k+4\right)}\\
	&\leq\frac{a+k-1}{2}+ \\
	&\sqrt{\left(n-b-\frac{a+k+3}{2}\right)^2 - \left(t^2-(2a+2b+3)t-2s+bk-3k+b^2+ab+3b+a+4\right)} \quad \text{(since $n\ge s+t$)}\\
\end{align*}

\begin{align*}
	&\leq\frac{a+k-1}{2}+ \\
	&\sqrt{\left(n-b-\frac{a+k+3}{2}\right)^2 - \left(t^2-(2a+2b+5)t+ab+b^2+(b-5)k+3b+a+6\right)} \quad \text{(since $s\le t+k-1$)}\\
	&\leq\frac{a+k-1}{2}+ \\
	&\sqrt{\left(n-b-\frac{a+k+3}{2}\right)^2 - \left(4k^2+(5b+4a+1)k+ab+b^2+b-a+2\right)} \quad \text{(since $t\ge 2(b+a+k+2)$)}\\
	&<n-b-2,
\end{align*}
}
which leads to a contradition with (\ref{eq13}).

{\bf Case 2.} $b+1\le t<\frac{n}{b+k+2}.$

Then we obtain $n \geq t(b+k+2)+1$. We use $\mathbf{x}$ to denote the Perron vector of $A(G)$. Let 
$W = V(G)\setminus(S \cup T) = \{w_1, w_2, \ldots, w_{n-s-t}\}.$ Without loss of generality, we suppose that $x(w_1) \geq x(w_2) \geq \cdots \geq x(w_{n-s-t})$. Since $n \geq t(b+k+2)+1$ and $s\le t+k-1$, we have $|W|=n-s-t \geq bt+(t-1)k + 2$. Then we assert that $G[T]$ is an independent. Otherwise,  there exists $uv \in E(T)$. Since $s\ge k+1$ and $t\ge b+1$, we have
$d_W(u) \leq \sum_{v \in T} d_{G-S}(v) \leq bk-1-bs+at=b(k-s)+at-1< bt+(t-1)k+2=|W|.$ Hence, there exists a vertex $w \in W$ such that $uw \notin E(G)$. Suppose that $c \in T$ with $x(c) = \max\{x(v) \mid v \in T\}$. Let $d_T(c) = h$. Since $c \in T$, we have $d_{G-S}(c) \leq a-1$ due to $T = \{x \in V(G) \setminus S \mid d_{G-S}(x) \leq a-1\}$.

Since $x(w_1) \geq x(w_2) \geq \cdots \geq x(w_{n-s-t})$, by $\lambda(G)\mathbf{x} = A(G)\mathbf{x}$, we obtain

\begin{align*}
	\lambda(G)x(c)&=\sum_{v\in s}x(v)+\sum_{v\in N_W(c)}x(v)+\sum_{v\in N_T(c)}x(v)\\
	&\le \sum_{v \in S} x(v)+\sum_{1 \leq i \leq a-1-h} x(w_i) + hx(c),
\end{align*}

\begin{align*}
	\lambda(G)x(w_{n-s-t})&=\sum_{v\in S}x(v)+\sum_{v\in N_W(w_{n-s-t})}x(v)+\sum_{v\in N_T(w_{n-s-t})}x(v)\\
	&\ge \sum_{v \in S} x(v)+\sum_{1 \leq i \leq a-1-h} x(w_i) + (n-s-t-a+h)x(w_{n-s-t}).
\end{align*}
	
Since $n\ge t(b+k+2)+1$ and $s\le t+k-1$, we have	
$$(\lambda(G)-h)(x(w_{n-s-t}) - x(c)) \geq (n-s-t-a)x(w_{n-s-t}) > 0.$$

Note that $h=d_T(c)\le d_{G-S}(c)\le a-1$, by (\ref{eq13}), we get $\lambda(G) > n-b-2 > a-1 \geq h$. Hence $x(w_{n-s-t}) > x(c)$. Since  $x(w) \geq x(w_{n-s-t})$ and $x(c) \geq x(v)$, we have $x(w) > x(v)$. Let $G' = G-uv+uw$. Note that
$$\sum_{v \in T} d_{G'-S}(v) = \sum_{v \in T} d_{G-S}(v) - 1 <bk-1+ at - bs.$$
By Lemmas \ref{le:2} and \ref{le:5}, we deduce that $G'$ is not $(a,b,k)$-critical and $\lambda(G') > \lambda(G)$, which contradicts the maximality of $\lambda(G)$. Hence, $G[T]$ is an independent set.

{\bf Subcase 2.1.} $t = b + 1$.

In this case, we will prove that $s=a+k.$ If $s \geq a +k+ 1$, for any $v\in T$, since $t=b+1$ and $b>a$, we have
$$0 \leq \sum_{v \in T} d_{G-S}(v) \leq bk-1+at-bs\leq bk-1+a(b+1)-b(a+k+1)=a-b-1 < 0,$$
 a contradiction.

If $s \leq a+k - 1$, note that $d_G(v) \geq \delta(G) \geq a+k$, then $d_{G-S}(v) = d_W(v) \geq a+k - s$ for $v \in T$. Since $W = \{w_1, w_2, \ldots, w_{n-s-t}\}= \{w_1, w_2, \ldots, w_{n-s-b-1}\}$ with $x(w_1) \geq x(w_2) \geq \cdots \geq x(w_{n-s-b-1})$, by Lemma \ref{le:5} and the maximality of $\lambda(G)$, we obtain $\{w_1, w_2, \ldots, w_{a+k-s}\} \subseteq N_G(v)$ for any $v \in T$.

Let $S' = S \cup \{w_1, w_2, \ldots, w_{a+k-s}\}$ . Then $|S'|=a+k-s+s=a+k.$

Since $s \leq a+k - 1$, we have 
$$\sum_{v \in T} d_{G-S'}(v) = \sum_{v \in T} d_{G-S}(v) - (a+k - s)(b + 1) \leq s-k-1 \le a-2,$$

$$bk-1+at-b|S'|=bk-1+a(b+1)-b(a+k)=a-1. $$

Hence $|S'|>s$ also satisfies $\sum_{v \in T} d_{G-S'}(v)<bk-1+at-b|S'|,$
 which contradicts the maximality of $s$. Thus $s = a+k$. Combining this with $t = b + 1$ and $\sum_{v \in T} d_{G-S}(v) \le bk-1+at-bs=a-1 ,$ by the maximality of $\lambda(G)$, we have $\sum_{v \in T} d_{G-S}(v)=a-1 $. Hence $G\in \mathscr{F}^{a,b,k}_n$. Again by the maximality of $\lambda(G)$ and Lemma \ref{le:10}, we get $G \cong F^{a,b,k}_n$, as required.

{\bf Subcase 2.2.} $t \geq b+2$.

 {\bf Subcase 2.2.1.} $s \leq a+k-1$.
 
  Since $n \geq t(b+k+2)+1$ and $t \geq b+2$, by integrating (\ref{eq14}) with $\delta(G) \geq a+k$, Lemmas \ref{le:7} and \ref{le:8}, we get
{\small
 \begin{align*}
		\lambda(G) &\leq \frac{\delta(G) - 1}{2} + \sqrt{2e(G) - n\delta(G) + \frac{(\delta(G) + 1)^2}{4}} \\
		&\leq \frac{a+k-1}{2} + \sqrt{2e(G) - n(a+k) + \frac{(a+k+1)^2}{4}} \\
		&\leq \frac{a+k-1}{2} + \sqrt{2\left(bk-1-bs+at + st + \frac{(n - t)(n - t - 1)}{2}\right) - n(a+k) + \frac{(a+k+1)^2}{4}} \\
		&=\frac{a+k-1}{2}+\sqrt{\left(n-b-\frac{a+k+3}{2}\right)^2 -f(t) },
	\end{align*}}
where $f(t)=2(t-b-1)n-t^2-(2s+2a+1)t+b^2+b(2s-k+3)+ab+a+k+4.$ We now proceed to show that $f(t)>0.$

Since  $n \geq t(b+k+2)+1$ and $t \geq b+2$, we have 
{\small
\begin{align*}
	f(t) &=2(t-b-1)n-t^2-(2s+2a+1)t+b^2+b(2s-k+3)+ab+a+k+4\\
	&\ge (2b+2k+3)t^2-(2b^2+6b+2a+2k(b+1)+2s+3)t+ab+b^2-(b-1)k\\
	&+2bs+a+b+2. \quad \text{(since $n \geq t(b+k+2)+1$)}\\
	&\ge (2b+2k+3)t^2-(2b^2+6b+4a+2k(b+2)+1)t+3ab+b^2\\
	&+(b+1)k+a-b+2. \quad \text{(since $s\le a+k-1$)}\\
	&\ge (b+1)k+2b^2-ab+6b-7a+12. \quad \text{(since $t\ge b+2\ge \bar{t}$)}\\
	&>b^2-a+12\\
	&>0,
\end{align*}}
where $\bar{t}=\frac{2b^2+6b+4a+2k(b+2)+1}{2(2b+2k+3)}.$

Hence, we have $f(t)>0$ and 
$$\lambda(G)=\frac{a+k-1}{2}+\sqrt{\left(n-b-\frac{a+k+3}{2}\right)^2 -f(t)}<n-b-2,$$
which contradicts with (\ref{eq13}).

  {\bf Subcase 2.2.2.}  $s \geq a+k$. 
  
  Let $T = T_{1} \cup T_{2}$ with $T_{1} = \{u_{1}, u_{2}, \ldots, u_{t-b-1}\}$ and $T_{2} = \{u_{t-b}, u_{t-b+1}, \ldots, u_{t}\}$ with $x(u_{1}) \geq x(u_{2}) \geq \cdots \geq x(u_{t})$, and let $S = S_{1} \cup S_{2}$ with $S_{1} = \{v_{1}, v_{2}, \ldots, v_{s-a-k}\}$ and $S_{2} = \{v_{s-a-k+1}, \ldots, v_{s}\}$. For $1 \leq i \leq n-s-t$ and $1 \leq j \leq t$, we have $N_{G}(u_{j}) \backslash \{w_{i}\} \subseteq N_{G}(w_{i}) \backslash \{u_{j}\}$, and hence $x(w_{i}) > x(u_{j})$ by Lemma \ref{le:6}. By $A(G)\mathbf{x} = \lambda(G)\mathbf{x}$, we obtain

\[
\begin{array}{rl}
	\lambda(G)x(w_{n-s-t}) & \geq \sum_{1 \leq i \leq s} x(v_{i})+\sum_{1 \leq i \leq n-s-t-1} x(w_{i}), \\
	\lambda(G)x(v_{1}) & = \sum_{2 \leq i \leq s} x(v_{i}) +\sum_{1 \leq i \leq n-s-t} x(w_{i})+ \sum_{1 \leq i \leq t} x(u_{i}).
\end{array}
\]

Note that $s\le t+k-1$, $n\ge t(b+k+2)+1$ and  $x(w_{i})>x(u_{j})$ for $1\leq i\leq n-s-t$ and $1\leq j\leq t$, we have

\[
\begin{aligned}
	(\lambda(G)+1)(2x(w_{n-s-t})-x(v_{1})) & \geq \sum_{1\leq i\leq s}x(v_{i})+x(w_{n-s-t})+\sum_{1\leq i\leq n-s-t-1}x(w_{i})-\sum_{1\leq i\leq t}x(u_{i})\\
	& > x(w_{n-s-t})+\sum_{1\leq i\leq n-s-2t-1}x(w_{i})\\
	& > 0
\end{aligned}
\]

Hence, we have $2x(w_{n-s-t})>x(v_{1})$.

Let $E_{1}=\{uv\in E(G)\,|\,u\in S_{1}\cup W,v\in T_{2}\}$, $E_{2}=\{uv\,|\,u\in W,v\in T_{1}\}$ and $E_{3}=\{u_{i}u_{j}\,|\,1\leq i<j\leq t-b-1\}$. Let $G^{*}=G-E_{1}+E_{2}+E_{3}$. We use $\mathbf{y}$ to denote the Perron vector of $A(G^{*})$. Clearly, $G^{*}\cong K_{a+k}\vee(K_{n-a-b-k-1}\cup(b+1)K_{1})$. Note that $y(v)=y(u_{1})$ for $v\in V(G)\backslash(T_{2}\cup S_{2})$, $y(v)=y(v_{s})$ for $v\in S_{2}$ and $y(v)=y(u_{t-b})$ for $v\in T_{2}$. 

By $A(G^{*})\mathbf{y}=\lambda(G^{*})\mathbf{y}$, we obtain
\[
\begin{array}{c}
	\lambda(G^{*}) y(u_{t-b}) = (a+k) y(v_{s}), \\
	\lambda(G^{*}) y(u_{1}) = (a+k) y(v_{s})+(n-a-b-k-2) y(w_{1}), \\
	\lambda(G^{*}) y(v_{s}) = (a+k-1) y(v_{s})+(n-a-b-k-1) y(w_{1}) + (b+1) y(u_{t-b}).
\end{array}
\]
Note that $N_{G^{*}}(u_{t-b})\setminus\{u_{1}\} \subseteq N_{G^{*}}(u_{1})\setminus\{u_{t-b}\}$, we get $y(u_{1}) > y(u_{t-b})$ by Lemma \ref{le:6}. By direct calculation, we have
\begin{align*}
	\lambda(G^{*})(2 y(u_{1}) - y(v_{s})) &= (a+k+1) y(v_{s})+ (n-a-b-k-3) y(u_{1})- (b+1) y(u_{t-b}) \\
	&> (a+k+1) y(v_{s})+(n-a-2b-k-4) y(u_{1})\\
	&> 0,
\end{align*}
and hence $2 y(u_{1}) > y(v_{s})$. Combining this with $n \geq t(b+k+2)+1$, $t \geq b+2$ and $b > a$, we obtain
$$\lambda(G^{*})(y(u_{1}) - 2 y(u_{t-b})) = (n-a-b-k-2) y(u_{1}) - (a+k) y(v_{s}) > (n-3a-b-k-2) y(u_{1}) > 0.$$

Therefore, we have $y(u_{1}) > 2 y(u_{t-b})$. Suppose that $e(W, T_{i}) = e_{i}$ for $i = 1, 2$.

Since $y(u_{1}) > 2y(u_{t-b})$ and $2x(w_{n-s-t}) > x(v_{1})$. Then
\begin{equation}\label{eq15}
	x(w_{n-s-t})y(u_{1}) - x(v_{1})y(u_{t-b}) > y(u_{t-b})(2x(w_{n-s-t}) - x(v_{1})) > 0.
\end{equation}

By Lemma \ref{le:6}, note that $N_{G}(w_{1})\setminus\{v_{1}\} \subseteq N_{G}(v_{1})\setminus\{w_{1}\}$, we obtain $x(v_{1}) > x(w_{1})$. Then

\begin{align*}
	&\quad \mathbf{y}^{T}(\lambda(G^{*})-\lambda(G))\mathbf{x} \\
	&=\mathbf{y}^{T}(A(G^{*})-A(G))\mathbf{x} \\
	&=\sum_{u_{i}v_{j}\in E_{2}}(x(u_{i})y(v_{j})+x(v_{j})y(u_{i}))+\sum_{u_{i}v_{j}\in E_{3}}(x(u_{i})y(v_{j})+x(v_{j})y(u_{i}))\\
	&-\sum_{u_{i}v_{j}\in E_{1}}(x(u_{i})y(v_{j})+x(v_{j})y(u_{i})) \\
	&\geq((n-s-t)(t-b-1)-e_{1})(x(w_{n-s-t})y(u_{1})+x(u_{t-b-1})y(w_{1})) \\
	&\quad+\frac{(t-b-1)(t-b-2)}{2}(x(u_{t-b-1})y(w_{1})+x(u_{t-b-1})y(w_{1})) \\
	&\quad-(s-a-k)(b+1)(x(v_{1})y(u_{t-b})+x(u_{t-b})y(v_{1}))-e_{2}(x(w_{1})y(u_{t-b})+x(u_{t-b})y(w_{1})) \\
	&>((n-s-t)(t-b-1)-e_{1})(x(w_{n-s-t})y(u_{1})+x(u_{t-b-1})y(v_{1})) \\
	&\quad-((s-a-k)(b+1)+e_{2})(x(v_{1})y(u_{t-b})+x(u_{t-b})y(v_{1})) \\
	&\quad(\text{since }x(v_{1})>x(w_{1})\text{ and }y(v_{1})=y(w_{1})) \\
	&>((n-s-t)(t-b-1)-(s-a-k)(b+1)-(e_{1}+e_{2}))\cdot\\
	&(x(v_1)y(u_{t-b})+x(u_{t-b-1})y(v_{1})) \quad(\text{by (\ref{eq15}) and }x(u_{t-b-1})\geq x(u_{t-b})) \\
	&
\end{align*}

Recall that $\sum_{v \in T} d_{G-S}(v) = e_{1} + e_{2} \leq bk-1+a t - b s$. Note that $n \geq t(b+k+2)+1$, $t \geq b+2$ and $b > a$, we obatin
\begin{align*}
	&\quad (n-s-t)(t-b-1)-(e_{1}+e_{2})-(s-a-k)(b+1) \\
	& \geq n-s-t-(bk-1+at-bs)-(s-a-k)(b+1)  \\
	& \geq t(b-a)+t-2s+(t+1)k+ab+a+2 \quad (\text{since } n \geq t(b+k+2)+1) \\
	& > 2(t-s)+(t+1)k+2 \quad (\text{since } b > a) \\
	& \geq (t-1)k+4 \quad (\text{since } s \leq t+k-1) \\
	& > 0.
\end{align*}

Hence $\lambda(G^{*})>\lambda(G)$. Since $0=\sum_{v \in T} d_{G^*-S}(v)\le a-1= bk-1+a|T_2|-b|W|$. Hence $G^*$ 
is not $(a,b,k)$-critical and $\lambda(G^{*})>\lambda(G)$, which leads a contradiction with the maximality of $\lambda(G)$.
This completes the proof.

\end{proof}
Next, we prove Theorem \ref{T2}, which characterizes the size condition for a graph to be $(a,b,k)$-critical.

\begin{proof}[\bf Proof of Theorem~\ref{T2}]  %
	Suppose that G is not $(a,b,k)$-critical, where $b> a\ge1$ and $k\ge0$. According to Lemma \ref{le:2}, there exists $S \subseteq V(G)$ such that
	$a|T| - \sum_{v \in T} d_{G-S}(v) \geq b|S|- bk+1,$ where $|S|\ge k$ and $T = \{x \in V(G) \setminus S \mid d_{G-S}(x) \leq a-1\}$. Let $|T|=t$ and $|S|=s$, then we obtain
	\begin{equation}\label{eq:2}
		 \sum_{v \in T} d_{G-S}(v) \le bk-1+at-bs.
	\end{equation}
	
 According to the proof of Theorem \ref{T1}, we have $s\ge k+1$, $t\ge b+1$ and
	\begin{equation}\label{eq:3}
		\begin{aligned}
			e(G) &\leq \sum_{v \in T} d_{G-S}(v) + e(S, T) + e(G - T) \\
			&\leq bk-1+at-bs+st + \binom{n - t}{2} \\
			&= bk-1+at-bs+st + \frac{(n - t)(n - t - 1)}{2}\\
			&=\binom{n-b-1}{2}+ab+2a+(b+1)k-y(t),
		\end{aligned}
	\end{equation}
where $y(t) = -\frac{t^2}{2} + \left(n - a- s - \frac{1}{2}\right)t + \frac{b^2}{2}+ab-(b+1)n+ bs+k + \frac{3}{2}b +2a + 2.$
	
Then we divide the proof of $y(t)>0$ into two cases depending on the value of $t$.
	
	{\bf Case 1.} $b+1\le t\le \frac{n}{2}.$

	{\bf Subcase 1.2.1.} $s > a - \frac{1}{b-a}+k$.
	
	We first prove that $t \geq s + b-a -k+1$. Otherwise, $t \leq s + b-a-k$. According to (\ref{eq:2}), $s > a - \frac{1}{b-a}+k$ and $b > a\ge1$, we have
	$$
	0 \leq \sum_{v \in T} d_{G-S}(v) \leq bk-1+at-bs\le (b-a)(a+k-s)-1<(b-a)\frac{1}{b-a}-1=0,
	$$
	a contradiction. Therefor, we have $s \leq t - b+a+k - 1$. Since $t\ge b+1$, we have $\frac{\partial f}{\partial s}=-t+b<0.$ By direct calculation,
	\begin{align*}
		y(t) &\geq -\frac{3t^2}{2} + \left(n + 2(b-a)-k + \frac{1}{2}\right)t -\frac{b^2}{2}+ 2ab - (b+1)n+(b+1)k+\frac{b}{2}+2a+2.
	\end{align*}
	Let  
	$$
	q(t)=-\frac{3t^2}{2} + \left(n + 2(b-a)-k + \frac{1}{2}\right)t -\frac{b^2}{2}+ 2ab - (b+1)n+(b+1)k+\frac{b}{2}+2a+2.
	$$
	Since $n \geq 4a + \frac{5}{2}b + 4k+7$ and $b > a \geq 1$, we have 
	$$
	q(b + 1) = 1 > 0
	$$
	and
	\begin{align*}
		q\left(\frac{n}{2}\right) &= \frac{n^2}{8} - \left(a + \frac{k}{2}+\frac{3}{4}\right)n + 2ab - \frac{b^2}{2} +(b+1)k+ \frac{b}{2} + 2a + 2 \\
		&\geq  \frac{9b^2}{32}+(\frac{9b}{4}-2a+\frac{3}{2})k+2a(b-a)+3b-a  + \frac{23}{8} \\
		&>  \frac{9b^2}{32} + \frac{23}{8}\\
		&>0
	\end{align*}
	 Therefore, we obtain 

	$$
	q(t) \geq \min\left\{q(b + 1), q\left(\frac{n}{2}\right)\right\} > 0.
	$$	for $b + 1 \leq t \leq \frac{n}{2}$. Hence $y(t) \geq q(t) > 0$ for $b + 1 \leq t \leq \frac{n}{2}$.
	
	{\bf Subcase 1.2.2.} $s \le a - \frac{1}{b-a}+k$.
	
	Since $s \leq a - \frac{1}{b-a}+k$, we have
{\small	\begin{align*}
		y(t) &\geq -\frac{t^2}{2} + \left(n+\frac{1}{b-a} - 2a -k- \frac{1}{2}\right)t 
		+ 2ab + \frac{b^2}{2}+(b+1)k-(b+1)n + (\frac{3}{2}-\frac{1}{b-a})b  + 2a + 2.
	\end{align*}}
   Let 
	$$
	r(t) = -\frac{t^2}{2} + \left(n+\frac{1}{b-a} - 2a -k- \frac{1}{2}\right)t 
	+ 2ab + \frac{b^2}{2}+(b+1)k-(b+1)n + (\frac{3}{2}-\frac{1}{b-a})b  + 2a + 2.
	$$
Since $n \geq 4a + \frac{5}{2}b + 4k+7$ and $b> a \geq 1$, we have
	$$
	r(b + 1) =  \frac{1}{b-a}+1 > 0
	$$
	and
	\begin{align*}
		r\left(\frac{n}{2}\right) &= \frac{3n^2}{8} - \left(a + b+\frac{k}{2} - \frac{1}{2(b-a)} + \frac{5}{4}\right)n + 2ab + \frac{b^2}{2}+(b+1)k + (\frac{3}{2}-\frac{1}{b-a})b+ 2a + 2  \\
		&\geq 4k^2+\frac{b+8a+8k+14}{4(b-a)}+(6a+\frac{13b}{4}+\frac{27}{2})k+\frac{11b^2}{32}+3ab+2a^2+\frac{9}{2}b+11a+\frac{93}{8}\\
		&> 0
	\end{align*}
	For $b + 1 \leq t \leq \frac{n}{2}$, we get
	$$
	r(t) \geq \min\left\{r(b + 1), r\left(\frac{n}{2}\right)\right\} > 0.
	$$
	Then $y(t) > 0$ for $b + 1 \leq t \leq \frac{n}{2}$. Combining this with (\ref*{eq:3}), we have $e(G) < \binom{n - b - 1}{2} + ab + 2a+(b+1)k$, a contradiction.
	
	{\bf Case 2.} $t\ge \frac{n+1}{2}.$
	
	Since $n \geq s + t$. Then $s \leq n - t$. Since $t \geq \frac{n+1}{2}$, we get
	\begin{align*}
		y(t) &\geq \frac{t^2}{2} - \left(a +b + \frac{1}{2}\right)t + ab+ \frac{b^2}{2} +k-n + \frac{3b}{2} + 2a + 2 \quad \text{(since $s \leq n - t$)} \\
		&\geq \frac{n^2}{8} -(\frac{a}{2}+\frac{b}{2}+1)n+ ab+\frac{b^2}{2}+k+b + \frac{3a}{2} + \frac{15}{8}  \quad \text{(since $t \geq \frac{n+1}{2}$)} \\
		&\geq 2k^2+(2a+\frac{b}{2}+4)k + \frac{b^2}{32} + \frac{ab}{4} - \frac{5b}{8} + a+1 \quad \text{(since $n \geq 4a + \frac{5}{2}b +4k+7 $)} \\
		&\ge \frac{b^2}{32} + \frac{ab}{4} - \frac{5b}{8}+2 \quad \text{(since  $a\ge1$ and $k\ge0$)} \\
		&> 0.
	\end{align*}
	
According to (\ref{eq:3}), we get $e(G) < \binom{n-b-1}{2} + ab + 2a+(b+1)k$, which also leads to a contradiction.
	
	This completes the proof.
\end{proof}

\section{Proofs of Theorems \ref{T3} and \ref{T4}}

In this section, we prove Theorems \ref{T3} and \ref{T4}, which characterize the spectral radius conditions for a graph to be fractional $(a,b,k)$-critical and fractional $(r,k)$-critical, respectively. 

For $b>a\ge1$, the proof of Theorem \ref{T3} can be an be proven using the approach for Theorem \ref{T1}. Therefore, our subsequent discussion will focus on proving Theorem \ref{T4}. If we set $a = b = r$ in 
Lemma \ref*{le:10}, we obtain the following lemma.

\begin{lemma}\label{le:11}
	Let $r\ge 1$ and $k$ be positive integers. If $G \in \mathscr{F}_{n}^{r,k}$ and $n \geq \frac{1}{2}(6r+r^2+(r+2)k)+2$, then
	$$\lambda(G) \leq \lambda(F_{n}^{r,k}),$$
	with equality if and only if $G \cong F_{n}^{r,k}$.
\end{lemma}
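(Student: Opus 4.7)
The plan is to obtain this lemma as a direct specialization of Lemma \ref{le:10} by setting $a = b = r$. Recall from the text preceding the lemma that by definition $F_n^{r,k} = F_n^{r,r,k}$; analogously $\mathscr{F}_n^{r,k} = \mathscr{F}_n^{r,r,k}$, since both are obtained from $K_{r+k} \vee (K_{n-2r-k-1} \cup (r+1)K_1)$ by adjoining $r - 1$ cross edges in the same prescribed way. Hence any $G \in \mathscr{F}_n^{r,k}$ belongs to $\mathscr{F}_n^{r,r,k}$, and the extremal graph $F_n^{r,k}$ is literally $F_n^{r,r,k}$.

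Next I would check that the order hypothesis matches up. Substituting $a = b = r$ into the bound $n \ge \frac{1}{2}(4a + 2b + ab + (b+2)k) + 2$ of Lemma \ref{le:10} yields $n \ge \frac{1}{2}(4r + 2r + r^2 + (r+2)k) + 2 = \frac{1}{2}(6r + r^2 + (r+2)k) + 2$, which is exactly the hypothesis of Lemma \ref{le:11}. Since Lemma \ref{le:10} is stated for the inclusive range $b \ge a \ge 1$, it applies directly, giving $\lambda(G) \le \lambda(F_n^{r,r,k}) = \lambda(F_n^{r,k})$ with equality if and only if $G \cong F_n^{r,k}$.

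The only concern, and the one step I would verify carefully, is whether the proof machinery of Lemma \ref{le:10} (together with the supporting Lemma \ref{le:9}) tacitly uses the strict inequality $b > a$ at any point. A brief inspection confirms that it does not: the two-sided bound $n - b - 2 < \lambda(G) < n - b - 1$ in Lemma \ref{le:9}, the positivity of $g(\lambda_0)$, and the final sign estimate on $f(\lambda, \lambda_0)$ all rely only on $b \ge a \ge 1$ in combination with the quadratic order bound on $n$. Therefore the entire perturbation/eigenvector argument passes through verbatim when $a = b = r$, and the conclusion of Lemma \ref{le:11} follows immediately. The main (and essentially only) obstacle is this verification; no genuinely new ingredient is needed, because the case $a = b = r$ sits on the boundary of the hypothesis $b \ge a \ge 1$ already covered by Lemma \ref{le:10}.
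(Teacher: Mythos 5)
Your proposal is correct and matches the paper exactly: the paper obtains this lemma precisely by setting $a=b=r$ in Lemma \ref{le:10} (whose hypothesis $b\ge a\ge 1$ already covers the equality case), with the order bound specializing to $n\ge\frac{1}{2}(6r+r^2+(r+2)k)+2$ as you computed. Your extra check that the machinery of Lemmas \ref{le:9} and \ref{le:10} never requires $b>a$ is a sensible verification, but no new argument is needed.
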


Using Lemma \ref{le:11}, we will prove Theorem \ref{T4}.

\begin{proof} [\bf Proof of Theorem~\ref{T4}]  %

By contradiction, suppose that $G$ achieves the maximal spectral radius among all connected graphs that are not fractional $(r,k)$-critical, where $r\ge1$ and $k\ge0$. By Lemma \ref{le:3}, there exists $S \subseteq V(G)$ satisfying $|S|$ as large
as possible such that $r|S|-r|T|+ \sum_{v \in T} d_{G-S}(v) \le rk-1,$
where $|S|\ge k$ and $T = \{x \in V(G) \setminus S \mid d_{G-S}(x) \leq r\}$. For simplicity, let $|T| = t$ and $|S| = s$. Then 

\begin{equation}\label{eq18}
	\sum_{v \in T} d_{G-S}(v) \le rk- 1+rt-rs.
\end{equation} 

Similarly, following the proof of Theorem \ref{T1}, we have $s\ge k+1$, $t\ge r+1$ and $s\le t+k-1$.

Recall that $F_{n}^{r,k}$ is not fractional $(r,k)$-critical and $K_{n-r-1}$ is a proper subgraph of $F_{n}^{r,k}$. According to the maximality of $\lambda(G)$, we have
\begin{equation}\label{eq19}
	\lambda(G) \geq \lambda(F_{n}^{r,k}) > \lambda(K_{n-r-1}) = n - r - 2.
\end{equation}

Again by the maximality of $\lambda(G)$ and Lemma \ref{le:4}, we can deduce that $G[V(G)\setminus T] \cong K_{n-t}$ and $G[S,T] \cong K_{s,t}$.

By (\ref{eq18}), we get
\begin{equation}\label{eq20}
	\begin{aligned}
		e(G) &\leq \sum_{v \in T} d_{G-S}(v) + e(S, T) + e(G - T) \\
		&\leq rk- 1+rt-rs+ st + \binom{n - t}{2} \\
		&= rk- 1+rt-rs + st + \frac{(n - t)(n - t - 1)}{2}.
	\end{aligned}
\end{equation}

We analyze the following two cases according to the value of $t$.

{\bf Case 1.} $t\ge\frac{n}{r+k+2}$

In this case, we have $t\ge\frac{n}{r+k+2}\ge 2(2r+k+2)$ due to $n\ge 2(2r+k+2)(r+k+2).$ Following the same proof approach as Case 1 in Theorem \ref{T1}, we derive that 
		$\lambda(G) \leq \frac{\delta(G) - 1}{2} + \sqrt{2e(G) - n\delta(G) + \frac{(\delta(G) + 1)^2}{4}} <n-r-2,$
which leads to a contradition with (\ref{eq19}).

{\bf Case 2.} $r+1\le t<\frac{n}{r+k+2}.$

Then we obtain $n \geq t(r+k+2)+1$. We use $\mathbf{x}$ to denote the Perron vector of $A(G)$. Let 
$W = V(G) \setminus (S \cup T) = \{w_1, w_2, \ldots, w_{n-s-t}\}.$ Without loss of generality, we suppose that $x(w_1) \geq x(w_2) \geq \cdots \geq x(w_{n-s-t})$. Since $n \geq t(r+k+2)+1$ and $s\le t+k-1$, we have $|W|=n-s-t \geq rt+(t-1)k + 2$. Then we assert that $G[T]$ is an independent. Otherwise,  there exists $uv \in E(T)$. Since $s\ge k+1$ and $t\ge r+1$, we have
$d_W(u) \leq \sum_{v \in T} d_{G-S}(v) \leq rk-1+rt-rs=r(k-s)+rt-1< rt+(t-1)k+2=|W|.$ Hence, there exists a vertex $w \in W$ such that $uw \notin E(G)$. Suppose that $c \in T$ with $x(c) = \max\{x(v) \mid v \in T\}$. Let $d_T(c) = h$. Since $c \in T$, we have $d_{G-S}(c) \leq r$.

Since $x(w_1) \geq x(w_2) \geq \cdots \geq x(w_{n-s-t})$, by $\lambda(G)\mathbf{x} = A(G)\mathbf{x}$, we obtain
\begin{align*}
	\lambda(G)x(c)\le \sum_{v \in S} x(v)+\sum_{1 \leq i \leq r-h} x(w_i) + hx(c),
\end{align*}
\begin{align*}
	\lambda(G)x(w_{n-s-t})\ge \sum_{v \in S} x(v)+\sum_{1 \leq i \leq r-h} x(w_i) + (n-s-t-r+h-1)x(w_{n-s-t}).
\end{align*}

Since $n\ge t(r+k+2)+1$ and $s\le t+k-1$, we have	
$$(\lambda(G)-h)(x(w_{n-s-t}) - x(c)) \geq (n-s-t-r-1)x(w_{n-s-t}) > 0.$$

Note that $h=d_T(c)\le d_{G-S}(c)\le r$, by (\ref{eq19}), we get $\lambda(G) > n-r-2 > r \geq h$. Hence $x(w_{n-s-t}) > x(c)$. Since  $x(w) \geq x(w_{n-s-t})$ and $x(c) \geq x(v)$, we have $x(w) > x(v)$. Let $G' = G-uv+uw$. Note that
$$\sum_{v \in T} d_{G'-S}(v) = \sum_{v \in T} d_{G-S}(v) - 1 <rk-1+ rt - rs.$$
By Lemmas \ref{le:2} and \ref{le:5}, we deduce that $G'$ is not fractional  $(r,k)$-critical and $\lambda(G') > \lambda(G)$, which contradicts the maximality of $\lambda(G)$. Hence, $G[T]$ is an independent set.

{\bf Subcase 2.1.} $t = r + 1$.

Similar to the proof of Subcase 2.1 in Theorem \ref{T1}, we have $s = r+k$. Combining this with $t = r + 1$, Lemma \ref{le:11} and the maximality of $\lambda(G)$, we get $G \cong F^{r,k}_n$, as required.

{\bf Subcase 2.2.} $t \geq r+2$.

{\bf Subcase 2.2.1.} $s \leq r+k-1$.

Since $n \geq t(r+k+2)+1$ and $t \geq r+2$, similar to the proof of Case 1 in Theorem \ref{T1}, we can show that $\lambda(G)<n-r-2$, which contradicts (\ref{eq19}).

{\bf Subcase 2.2.2.}  $s \geq r+k$. 

Let $T = T_{1} \cup T_{2}$ with $T_{1} = \{u_{1}, u_{2}, \ldots, u_{t-r-1}\}$ and $T_{2} = \{u_{t-r}, u_{t-r+1}, \ldots, u_{t}\}$ with $x(u_{1}) \geq x(u_{2}) \geq \cdots \geq x(u_{t})$, and let $S = S_{1} \cup S_{2}$ with $S_{1} = \{v_{1}, v_{2}, \ldots, v_{s-r-k}\}$ and $S_{2} = \{v_{s-r-k+1}, \ldots, v_{s}\}$. For $1 \leq i \leq n-s-t$ and $1 \leq j \leq t$, we have $N_{G}(u_{j}) \backslash \{w_{i}\} \subseteq N_{G}(w_{i}) \backslash \{u_{j}\}$, and hence $x(w_{i}) > x(u_{j})$ by Lemma \ref{le:6}. By $A(G)\mathbf{x} = \lambda(G)\mathbf{x}$, we obtain
\[
\begin{array}{rl}
	\lambda(G)x(w_{n-s-t}) & \geq \sum_{1 \leq i \leq s} x(v_{i})+\sum_{1 \leq i \leq n-s-t-1} x(w_{i}), \\
	\lambda(G)x(v_{1}) & = \sum_{2 \leq i \leq s} x(v_{i}) +\sum_{1 \leq i \leq n-s-t} x(w_{i})+ \sum_{1 \leq i \leq t} x(u_{i}).
\end{array}
\]

Note that $s\le t+k-1$, $n\ge t(r+k+2)+1$ and  $x(w_{i})>x(u_{j})$ for $1\leq i\leq n-s-t$ and $1\leq j\leq t$, we have
\[
\begin{aligned}
	(\lambda(G)+1)(2x(w_{n-s-t})-x(v_{1})) & \geq \sum_{1\leq i\leq s}x(v_{i})+x(w_{n-s-t})+\sum_{1\leq i\leq n-s-t-1}x(w_{i})-\sum_{1\leq i\leq t}x(u_{i})\\
	& > x(w_{n-s-t})+\sum_{1\leq i\leq n-s-2t-1}x(w_{i})\\
	& > 0
\end{aligned}
\]

Hence, we have $2x(w_{n-s-t})>x(v_{1})$.

Let $E_{1}=\{uv\in E(G)\,|\,u\in S_{1}\cup W,v\in T_{2}\}$, $E_{2}=\{uv\,|\,u\in W,v\in T_{1}\}$ and $E_{3}=\{u_{i}u_{j}\,|\,1\leq i<j\leq t-r-1\}$. Let $G^{*}=G-E_{1}+E_{2}+E_{3}$. We use $\mathbf{y}$ to denote the Perron vector of $A(G^{*})$. Clearly, $G^{*}\cong K_{r+k}\vee(K_{n-2r-k-1}\cup(r+1)K_{1})$. Note that $y(v)=y(u_{1})$ for $v\in V(G)\backslash(T_{2}\cup S_{2})$, $y(v)=y(v_{s})$ for $v\in S_{2}$ and $y(v)=y(u_{t-r})$ for $v\in T_{2}$. 

By $A(G^{*})\mathbf{y}=\lambda(G^{*})\mathbf{y}$, we obtain
\[
\begin{array}{c}
	\lambda(G^{*}) y(u_{t-r}) = (r+k) y(v_{s}), \\
	\lambda(G^{*}) y(u_{1}) = (r+k) y(v_{s})+(n-2r-k-2) y(w_{1}), \\
	\lambda(G^{*}) y(v_{s}) = (r+k-1) y(v_{s})+(n-2r-k-1) y(w_{1}) + (r+1) y(u_{t-r}).
\end{array}
\]

Note that $N_{G^{*}}(u_{t-r})\setminus\{u_1\} \subseteq N_{G^{*}}(u_{1})\setminus\{u_{t-r}\}$, we get $y(u_{1}) > y(u_{t-r})$ by Lemma \ref{le:6}. By direct calculation, we have
\begin{align*}
	\lambda(G^{*})(2 y(u_{1}) - y(v_{s})) &= (r+k+1) y(v_{s})+ (n-2r-k-3) y(u_{1})- (r+1) y(u_{t-r}) \\
	&> (r+k+1) y(v_{s})+(n-3r-k-4) y(u_{1})\\
	&> 0,
\end{align*}
and hence $2 y(u_{1}) > y(v_{s})$. Combining this with $n \geq t(r+k+2)+1$, $t \geq r+2$, we obtain
$$\lambda(G^{*})(y(u_{1}) - 2 y(u_{t-r})) = (n-2r-k-2) y(u_{1}) - (r+k) y(v_{s}) > (n-4r-k-2) y(u_{1}) > 0.$$

Therefore, we have $y(u_{1}) > 2 y(u_{t-r})$. Suppose that $e(W, T_{i}) = e_{i}$ for $i = 1, 2$. Since $y(u_{1}) > 2y(u_{t-r})$ and $2x(w_{n-s-t}) > x(v_{1})$. Then
\begin{equation}\label{eq21}            
	x(w_{n-s-t})y(u_{1}) - x(v_{1})y(u_{t-r}) > y(u_{t-r})(2x(w_{n-s-t}) - x(v_{1})) > 0.
\end{equation}

By Lemma \ref{le:6}, since $N_{G}(w_{1})\setminus\{v_{1}\} \subseteq N_{G}(v_{1})\setminus\{w_{1}\}$, we obtain $x(v_{1}) > x(w_{1})$. Then
{\small
\begin{align*}
	&\quad y^{T}(\lambda(G^{*})-\lambda(G))x \\
	&=y^{T}(A(G^{*})-A(G))x \\
	&=\sum_{u_{i}v_{j}\in E_{2}}(x(u_{i})y(v_{j})+x(v_{j})y(u_{i}))+\sum_{u_{i}v_{j}\in E_{3}}(x(u_{i})y(v_{j})+x(v_{j})y(u_{i}))\\
	&-\sum_{u_{i}v_{j}\in E_{1}}(x(u_{i})y(v_{j})+x(v_{j})y(u_{i})) \\
	&\geq((n-s-t)(t-r-1)-e_{1})(x(w_{n-s-t})y(u_{1})+x(u_{t-r-1})y(w_{1})) \\
	&\quad+\frac{(t-r-1)(t-r-2)}{2}(x(u_{t-r-1})y(w_{1})+x(u_{t-r-1})y(w_{1})) \\
	&\quad-(s-r-k)(r+1)(x(v_{1})y(u_{t-r})+x(u_{t-r})y(v_{1}))-e_{2}(x(w_{1})y(u_{t-r})+x(u_{t-r})y(w_{1})) \\
	&>((n-s-t)(t-r-1)-e_{1})(x(w_{n-s-t})y(u_{1})+x(u_{t-r-1})y(v_{1}))\\
	&\quad+(t-r-1)(t-r-2)x(u_{t-r-1})y(w_{1}) \\
	&\quad-((s-r-k)(r+1)+e_{2})(x(v_{1})y(u_{t-r})+x(u_{t-r})y(v_{1})) \\
	&\quad(\text{since }x(v_{1})>x(w_{1})\text{ and }y(v_{1})=y(w_{1})) \\
	&>((n-s-t)(t-r-1)-(s-r-k)(r+1)-(e_{1}+e_{2}))\cdot(x(v_{1})y(u_{t-r})+x(u_{t-r-1})y(v_{1}))\\
	&
\end{align*}}
Recall that $\sum_{v \in T} d_{G-S}(v) = e_{1} + e_{2} \leq rk-1+r t - r s$. Note that $n \geq t(r+k+2)+1$, $t \geq r+2$, to prove $\lambda(G^{*})>\lambda(G)$, we only need to prove that $(n-s-t)(t-r-1)-(s-r-k)(r+1)-(e_{1}+e_{2})>0$. The following proof will proceed based on the value of $t$.

{\bf Subcase 2.2.2.1.}  $t=r+2$. 
\begin{align*}
	&\quad (n-s-t)(t-r-1)-(s-r-k)(r+1)-(e_{1}+e_{2}) \\
	& \ge n-s-t-(s-r-k)(r+1)-(rk-1+rt-rs)  \\
	&=t-2s+(t+1)k+r^2+r+2\quad (\text{since } n \geq t(r+k+2)+1)\\
	& \geq -t+(t-1)k+r^2+r+4 \quad (\text{since } s \leq t+k-1) \\
	& = r^2+(r+1)k+2 \quad (\text{since } t=r+2) \\
	&>0.
\end{align*}

{\bf Subcase 2.2.2.2.}  $t\ge r+3$. 
\begin{align*}
	&\quad (n-s-t)(t-r-1)-(s-r-k)(r+1)-(e_{1}+e_{2}) \\
	& \ge 2(n-s-t)-(s-r-k)(r+1)-(rk-1+rt-rs)  \\
	&=2n-3s-2t-rt+r^2+r+k+1\\
	& \geq rt+2tk+2t-3s+r^2+r+k+3 \quad (\text{since } n \geq t(r+k+2)+1) \\
	& \geq (r-1)t+2k(t-1)+r^2+r+6\quad (\text{since } s \leq t+k-1) \\
	&>0.
\end{align*}

Hence $(n-s-t)(t-r-1)-(s-r-k)(r+1)-(e_{1}+e_{2})>0$ for $t\ge r+2$, we have $\lambda(G^{*})>\lambda(G)$. Since $0=\sum_{v \in T} d_{G^*-S}(v)\le r-1=rk-1+r|T_2|-r|W|$. Hence $G^*$ 
is not $(r,k)$-critical and $\lambda(G^{*})>\lambda(G)$, which leads a contradiction with the maximality of $\lambda(G)$.
This completes the proof.

\end{proof}

\section{Concluding Remarks and Future Work}

In this section, we explore several extensions to our results and propose a conjecture for future research. Let  $F_{n}^{a,b,0}=F_{n}^{a,b}$. If we set $k=0$ in Theorems \ref{T1} and \ref{T2}, we can obtain results guaranteeing the existence of $[a,b]$-factor, which are the same conclusion as in \cite{FLZ}. 

\noindent\begin{theorem}(\cite{FLZ})   \  For integers $b> a\ge 1$, and let $G$ be a connected graph of order $n \geq 2(b+a+2)(b+2)$  with minimum degree $\delta(G)\ge a$. If
	$$\lambda(G) \geq \lambda(F_{n}^{a,b}),$$
	then $G$ has an $[a,b]$-factor, unless $G \cong F_{n}^{a,b}$.
\end{theorem}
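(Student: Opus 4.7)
The plan is to obtain this statement as the $k=0$ specialization of Theorem \ref{T1}, which has already been established. First I would verify that the quantities and hypotheses line up under the substitution $k=0$. By construction $F_n^{a,b,0}$ is obtained from $K_a \vee (K_{n-(a+b+1)} \cup (b+1)K_1)$ by adding $a-1$ edges from one vertex of $V((b+1)K_1)$ to $a-1$ vertices of $V(K_{n-(a+b+1)})$, which is precisely $F_n^{a,b}$ as defined. The order bound $n \geq 2(b+a+k+2)(b+k+2)$ becomes $n \geq 2(b+a+2)(b+2)$, and the degree bound $\delta(G) \geq a+k$ becomes $\delta(G) \geq a$, matching the present hypotheses exactly. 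Finally, a graph $G$ is $(a,b,0)$-critical if and only if $G$ itself (deletion of $0$ vertices) contains an $[a,b]$-factor, so the conclusion of Theorem \ref{T1} at $k=0$ is precisely the conclusion here.

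Next I would simply invoke Theorem \ref{T1} with $k=0$: under the spectral hypothesis $\lambda(G) \geq \lambda(F_n^{a,b,0}) = \lambda(F_n^{a,b})$, Theorem \ref{T1} yields that $G$ is $(a,b,0)$-critical, hence contains an $[a,b]$-factor, unless $G \cong F_n^{a,b,0} = F_n^{a,b}$. This is exactly the stated conclusion, so no further work is required.

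For completeness one should check that the exceptional case is genuine, i.e.\ that $F_n^{a,b}$ really lacks an $[a,b]$-factor. Applying Lemma \ref{le:2} with $S = V(K_a)$ and $T = V((b+1)K_1)$ in $F_n^{a,b}$, one has $\sum_{x\in T} d_{G-S}(x) = a-1$, so $a|T| - \sum_{x\in T} d_{G-S}(x) = a(b+1) - (a-1) = ab + 1 > ab = b|S| - b\cdot 0$, which violates the criterion of Lemma \ref{le:2} at $k=0$; thus $F_n^{a,b}$ is not $(a,b,0)$-critical and the ``unless'' clause is necessary. There is no genuine obstacle to this derivation because all of the technical content, including the spectral-radius manipulations on the extremal graph $F_n^{a,b,k}$ (Lemmas \ref{le:9}, \ref{le:10}) and the case analysis on $|S|, |T|$ in the proof of Theorem \ref{T1}, was already carried out in the general setting; specializing $k=0$ simply renames those objects.
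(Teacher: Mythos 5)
Your proposal is correct and matches the paper's own derivation: the paper obtains this statement precisely by setting $k=0$ in Theorem \ref{T1}, noting that $(a,b,0)$-criticality is equivalent to having an $[a,b]$-factor and that $F_n^{a,b,0}=F_n^{a,b}$. Your extra verification that $F_n^{a,b}$ has no $[a,b]$-factor via Lemma \ref{le:2} is consistent with the computation the paper already performed for general $k$ in the introduction.
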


\begin{theorem}(\cite{FLZ})
	\  For integers $b> a\ge 1$, let $G$ be a connected graph of order $n\ge4a+\frac{5b}{2}+7$ with minimum degree $\delta(G)\ge a$. If
	$$e(G)\ge \binom{n-b-1}{2}+ab+2a,$$
	then $G$ has an $[a,b]$-factor. 
\end{theorem}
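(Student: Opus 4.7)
The statement is precisely the $k=0$ specialization of Theorem \ref{T2}: the hypotheses $n\ge 4a+\tfrac{5b}{2}+7$, $\delta(G)\ge a$, and $e(G)\ge \binom{n-b-1}{2}+ab+2a$ match the $k=0$ instance (since $(b+1)k$ vanishes), and being $(a,b,0)$-critical is by definition the same as possessing an $[a,b]$-factor. Hence the cleanest proof is simply to invoke Theorem \ref{T2} with $k=0$.

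For a self-contained proof, the plan is to mirror the argument for Theorem \ref{T2}, specialized to $k=0$. Suppose, for contradiction, that $G$ has no $[a,b]$-factor. By Lemma \ref{le:2} with $k=0$, there is $S\subseteq V(G)$ together with $T=\{x\in V(G)\setminus S:d_{G-S}(x)\le a-1\}$ satisfying $\sum_{v\in T}d_{G-S}(v)\le at-bs-1$, where $s=|S|$ and $t=|T|$. Using $\delta(G)\ge a$ and arguing exactly as in the three claims inside the proof of Theorem \ref{T1} (with $k=0$), I would first establish $s\ge 1$, $t\ge b+1$, and $s\le t-1$.

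Next, decompose the edge count as $e(G)\le e(G-S-T,T)+e(S,T)+e(G-T)\le (at-bs-1)+st+\binom{n-t}{2}$, and rewrite the right-hand side as $\binom{n-b-1}{2}+ab+2a-y(t)$, where $y(t)$ is a quadratic in $t$ (with parameters $a,b,s,n$) obtained by the same algebraic rearrangement as in Theorem \ref{T2}. The main obstacle is to show $y(t)>0$ on the admissible range $b+1\le t\le n-s$, which will contradict the size hypothesis.

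This positivity check splits into two regimes. For $b+1\le t\le n/2$, I would further subdivide according to whether $s>a-\tfrac{1}{b-a}$ or $s\le a-\tfrac{1}{b-a}$. In the first subcase the bound $t\ge s+b-a+1$ (forced by the contradiction hypothesis together with $s>a-\tfrac{1}{b-a}$) lets me eliminate $s$; in the second subcase I simply substitute the explicit upper bound on $s$. In each subcase the resulting univariate quadratic in $t$ is verified positive at the endpoints $t=b+1$ and $t=n/2$ using $n\ge 4a+\tfrac{5b}{2}+7$. For the remaining regime $t\ge (n+1)/2$, the trivial bound $s\le n-t$ yields a lower estimate on $y(t)$ whose minimum on $t\ge (n+1)/2$ is already positive under the same hypothesis on $n$. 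Combining the two regimes gives $e(G)<\binom{n-b-1}{2}+ab+2a$, the desired contradiction.
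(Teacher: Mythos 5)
Your proposal is correct and matches the paper exactly: the paper obtains this statement precisely by setting $k=0$ in Theorem \ref{T2}, and your self-contained sketch (the claims $s\ge 1$, $t\ge b+1$, $s\le t-1$, the edge decomposition, and the two-regime positivity check for $y(t)$) is the $k=0$ specialization of the paper's own proof of Theorem \ref{T2}. No substantive differences.
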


If we set $k=0$ in Theorem \ref{T5}, we can obtain results guaranteeing the existence of fractional $[a,b]$-factor

\noindent\begin{theorem}   \  For integers $b\ge a\ge 1$, and let $G$ be a connected graph of order $n \geq 2(b+a+2)(b+2)$  with minimum degree $\delta(G)\ge a$. If
	$$\lambda(G) \geq \lambda(F_{n}^{a,b}),$$
	then $G$ has a fractional $[a,b]$-factor, unless $G \cong F_{n}^{a,b}$.
\end{theorem}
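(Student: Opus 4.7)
The plan is to derive this theorem as the $k=0$ specialization of Theorem~\ref{T5}. By the definition of fractional $(a,b,k)$-critical graphs, a fractional $(a,b,0)$-critical graph is exactly a graph admitting a fractional $[a,b]$-factor, since deleting zero vertices leaves $G$ itself. Moreover $F_n^{a,b,0} = F_n^{a,b}$, and the numerical hypotheses $n \geq 2(b+a+2)(b+2)$ and $\delta(G) \geq a$ are precisely the $k=0$ instances of those in Theorem~\ref{T5}. Hence the stated theorem follows immediately from Theorem~\ref{T5} by substituting $k=0$.

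For a self-contained sketch, I would argue as follows. Suppose for contradiction that $G$ attains the maximum spectral radius among connected graphs satisfying the hypotheses but admitting no fractional $[a,b]$-factor. By Lemma~\ref{le:3} with $k=0$, there exists $S \subseteq V(G)$, chosen with $|S|$ as large as possible, and $T = \{x \in V(G)\setminus S : d_{G-S}(x) \leq a\}$ such that
\[
 \sum_{x \in T} d_{G-S}(x) \leq a|T| - b|S| - 1.
\]
Writing $s=|S|$ and $t=|T|$, the condition $\delta(G)\geq a$ forces $s\geq 1$, the degree sum bound then gives $t\geq b+1$, and $b\geq a$ together with the maximality of $s$ yields $s\leq t-1$. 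The spectral hypothesis combined with Lemma~\ref{le:9} gives $\lambda(G) \geq \lambda(F_n^{a,b}) > n-b-2$, and by Lemma~\ref{le:4} and the maximality of $\lambda(G)$ we may assume $G[V(G)\setminus T]\cong K_{n-t}$ and $G[S,T]\cong K_{s,t}$.

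Two cases then arise from a bound on $t$. When $t \geq n/(b+2)$, the edge count $e(G)\leq \binom{n-t}{2}+st+\sum_{v\in T}d_{G-S}(v)$ plugged into Lemmas~\ref{le:7} and~\ref{le:8} yields $\lambda(G)<n-b-2$, contradicting the spectral hypothesis. When $b+1 \leq t < n/(b+2)$, a Perron-vector swap via Lemma~\ref{le:5} first forces $G[T]$ to be independent; if moreover $t=b+1$ and $s=a$, then $G\in \mathscr{F}_n^{a,b}$ and Lemma~\ref{le:10} identifies $G\cong F_n^{a,b}$ as required; otherwise ($t\geq b+2$ or $s\geq a+1$), one constructs $G^*$ by redistributing edges incident to $T$ so that $G^*\cong K_{a}\vee(K_{n-a-b-1}\cup (b+1)K_1)$ and shows, using comparisons of Perron entries of $A(G)$ and $A(G^*)$, that $\lambda(G^*)>\lambda(G)$ while $G^*$ still lacks a fractional $[a,b]$-factor, contradicting the maximality of $\lambda(G)$. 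The case $a=b=r$ is covered by Theorem~\ref{T4} with $k=0$.

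The main technical obstacle is the Perron-vector swap step in the small-$t$ regime: one must simultaneously certify $\lambda(G^*)>\lambda(G)$ by comparing quadratic forms $\mathbf{y}^T(A(G^*)-A(G))\mathbf{x}$ and verify that $G^*$ still fails the condition in Lemma~\ref{le:3}. This requires the order-of-magnitude control on $n$ furnished by $n\geq 2(b+a+2)(b+2)$, mirroring the estimates in the proofs of Theorems~\ref{T1} and~\ref{T4}.
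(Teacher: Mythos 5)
Your proposal is correct and follows exactly the paper's route: the paper itself obtains this statement by setting $k=0$ in Theorem~\ref{T5} (which combines Theorem~\ref{T3} for $b>a$ and Theorem~\ref{T4} for $a=b$), and your self-contained sketch mirrors the paper's proofs of Theorems~\ref{T1} and~\ref{T4} step for step. No substantive differences to report.
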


Let $F_{n}^{r,0}=F_{n}^{r}$. By setting $k=0$ in Theorem \ref{T4}, we obtain the spectral radius condition for the existence of a fractional $r$-factor in a graph, which directly addresses Problem \ref{prob2}.

\noindent\begin{theorem}   \  For integers $r\ge 1$, and let $G$ be a connected graph of order $n \geq 4(r+1)(r+2)$  with minimum degree $\delta(G)\ge r$. If
	$$\lambda(G) \geq \lambda(F_{n}^{r}),$$
	then $G$ has a fractional $r$-factor, unless $G \cong F_{n}^{r}$.
\end{theorem}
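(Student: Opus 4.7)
The plan is to obtain this statement as an immediate specialization of Theorem~\ref{T4} with $k = 0$. First I would verify that every hypothesis transfers verbatim: the order bound becomes $n \geq 2(2r+0+2)(r+0+2) = 4(r+1)(r+2)$, which is exactly the bound assumed here; the minimum degree requirement $\delta(G) \geq r + k$ collapses to $\delta(G) \geq r$; and under the notational convention $F_{n}^{r,0} = F_{n}^{r}$ fixed at the start of Section~5, the spectral inequality $\lambda(G) \geq \lambda(F_{n}^{r,k})$ coincides with $\lambda(G) \geq \lambda(F_{n}^{r})$. Thus Theorem~\ref{T4} applies directly to our graph $G$.

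Invoking Theorem~\ref{T4} then yields that $G$ is a fractional $(r,0)$-critical graph, unless $G \cong F_{n}^{r}$. The remaining step is purely definitional: by the definition of fractional $(a,b,k)$-critical graphs given in Subsection~1.2, a fractional $(r,0)$-critical graph is one from which deleting any $0$ vertices leaves a graph admitting a fractional $[r,r]$-factor, i.e., $G$ itself possesses a fractional $r$-factor. Combining these two observations gives the conclusion, and, as the excerpt emphasizes, this directly addresses Problem~\ref{prob2} of Li et al.\ \cite{LFZ}.

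Because the substantive work is entirely carried out in the proof of Theorem~\ref{T4} (Section~4), there is no real obstacle to overcome in this corollary; the whole argument reduces to a routine substitution $k = 0$ together with an unpacking of the definition of fractional $(r,k)$-criticality. The only small consistency check worth stating explicitly is that the threshold $4(r+1)(r+2)$ equals $2(2r+k+2)(r+k+2)$ at $k = 0$, and that $F_{n}^{r,0}$ as constructed in the excerpt agrees edge-for-edge with the graph $F_{n}^{r}$ appearing in Problem~\ref{prob2} — both of which are immediate from the constructions.
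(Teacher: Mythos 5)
Your proposal is correct and matches the paper's own treatment exactly: the paper derives this theorem by setting $k=0$ in Theorem~\ref{T4}, using the convention $F_{n}^{r,0}=F_{n}^{r}$, and the arithmetic check $2(2r+2)(r+2)=4(r+1)(r+2)$ together with the definitional unpacking of fractional $(r,0)$-criticality is all that is needed.
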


Let $X$, $Y$ are disjoint subsets of $V(G)$ and  $r$ is positive integer. We call a component $C$ of $G-(X\cup Y)$ odd if $r|V(C)|+e_G(Y,V(C))$ is odd. Then we use $h_G(X,Y)$ to denote the number of odd components of $G-(X\cup Y)$. Liu and Yu \cite{LY2} established conditions for a graph to be $(r,k)$-critical. Li and Yan generalized the conclusions of  \cite{LY}.

\begin{lemma}(\cite{LY,LY2})\label{le:5.4}
	Let $r$ and $k$ be integers with $r \geq 2$ and $k \geq 0$, and $G$ be a graph of order $n \geq r + k + 1$. Then $G$ is $(r,k)$-critical if and only if 
	$$r|X| - r|Y| + \sum_{v \in Y} d_{G-X}(v) - h_{G}(X, Y)\ge rk$$
for every pair $X,Y$ of disjoint subsets of $V(G)$ with $|X| \geq k$.
\end{lemma}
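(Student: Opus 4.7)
The plan is to derive this characterization by invoking Tutte's classical $r$-factor theorem on each deletion $G-W$ with $|W|=k$, and transporting inequalities back and forth via the correspondence $X=X'\cup W$, $Y=Y'$. Recall that Tutte's $r$-factor theorem states that a graph $H$ contains an $r$-factor if and only if
$$r|X'|-r|Y'|+\sum_{v\in Y'}d_{H-X'}(v)\ \geq\ h_H(X',Y')$$
for every pair of disjoint $X',Y'\subseteq V(H)$, using exactly the same definition of odd component (a component $C$ of $H-(X'\cup Y')$ is odd when $r|V(C)|+e_H(Y',V(C))$ is odd). Thus the lemma will follow once we can translate pairs $(X',Y')$ in $G-W$ to pairs $(X,Y)$ in $G$ with $|X|\geq k$, and verify that the two $h$-values agree.

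For the sufficiency direction, assume the displayed inequality of Lemma~\ref{le:5.4} holds and fix any $W\subseteq V(G)$ with $|W|=k$; the goal is to certify Tutte's condition for $G-W$. Given disjoint $X',Y'\subseteq V(G-W)$, set $X=X'\cup W$ and $Y=Y'$, so $X\cap Y=\emptyset$ and $|X|\geq k$. Observe that $G-(X\cup Y)$ and $(G-W)-(X'\cup Y')$ are literally the same graph, and since $W$ is disjoint from $Y$ and from the vertex set of every component $C$ of this common graph, one has $e_G(Y,V(C))=e_{G-W}(Y',V(C))$ and $\sum_{v\in Y}d_{G-X}(v)=\sum_{v\in Y'}d_{(G-W)-X'}(v)$; hence $h_G(X,Y)=h_{G-W}(X',Y')$. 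Substituting into the hypothesis and using $|X|=|X'|+k$ yields Tutte's inequality for $G-W$, giving an $r$-factor of $G-W$ as required.

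The necessity direction is the mirror image: given any disjoint $X,Y\subseteq V(G)$ with $|X|\geq k$, choose $W\subseteq X$ with $|W|=k$ and put $X'=X\setminus W$, $Y'=Y$. Since $G$ is $(r,k)$-critical, $G-W$ has an $r$-factor, so Tutte's inequality holds for $(X',Y')$ in $G-W$; invoking the same matching of degree sums and $h$-values and rearranging with $|X'|=|X|-k$ yields precisely the claimed lower bound $rk$.

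The only genuine obstacle in this plan is the parity bookkeeping that underlies $h_G(X,Y)=h_{G-W}(X',Y')$. One must check that adjoining $W$ to $X$ does not change which components are odd, and this is exactly where the disjointness $W\cap Y=\emptyset$ and $W\cap V(C)=\emptyset$ is used: no edge incident to $W$ can contribute to either $e_G(Y,V(C))$ or $e_{G-W}(Y',V(C))$, so the two parities $r|V(C)|+e_G(Y,V(C))$ and $r|V(C)|+e_{G-W}(Y',V(C))$ coincide. Once this verification is in hand, the argument is essentially a re-indexing of Tutte's criterion.
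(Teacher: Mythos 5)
The paper does not prove Lemma~\ref{le:5.4}; it imports it from \cite{LY,LY2} as a known characterization, so there is no internal proof to compare your attempt against. Your derivation from Tutte's $r$-factor theorem is correct and is the standard route one finds in those references: the whole content of the lemma is the re-indexing $X=X'\cup W$, $|X|=|X'|+k$, and the three identifications you verify --- that $G-(X\cup Y)$ and $(G-W)-(X'\cup Y')$ are the same graph, that $\sum_{v\in Y}d_{G-X}(v)=\sum_{v\in Y'}d_{(G-W)-X'}(v)$, and that $h_G(X,Y)=h_{G-W}(X',Y')$. The last of these is the only delicate point, and your parity argument (no edge counted in $e_G(Y,V(C))$ or in $e_{G-W}(Y',V(C))$ is incident to $W$, since $W$ is disjoint from both $Y$ and $V(C)$) disposes of it correctly; both directions then follow by adding or subtracting $rk$ on the left-hand side. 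The one thing your write-up leans on without proof is the quoted form of Tutte's $r$-factor criterion with exactly this odd-component convention; that is the standard specialization of the $f$-factor theorem to $f\equiv r$, so granting it as known, your argument is complete and self-contained where the paper merely cites.
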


Lemma \ref{le:5.4} shows that analyzing $(r,k)$-critical graphs is challenging due to their dependence on the number of components. We conclude by posing a conjecture for future research.

\begin{conjecture}
	 \  For integers $r\ge 2$ and $k\ge 0$, and let $G$ be a connected graph of order $n \geq 2(2r+k + 2)(r+k + 2)$  with minimum degree $\delta(G)\ge r+k$. If
		$$\lambda(G) \geq \lambda(F_{n}^{r,k}),$$
		then $G$ is a $(r,k)$-critical graph, unless $G \cong F_{n}^{r,k}$.
\end{conjecture}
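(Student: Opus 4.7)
The plan is to adapt the argument of Theorem~\ref{T4}, replacing the fractional criterion of Lemma~\ref{le:3} by the integral one in Lemma~\ref{le:5.4}. Suppose for contradiction that $G$ realizes the maximum spectral radius among connected graphs of order $n \geq 2(2r+k+2)(r+k+2)$ with $\delta(G)\geq r+k$ that fail to be $(r,k)$-critical. By Lemma~\ref{le:5.4}, there exist disjoint $X,Y \subseteq V(G)$ with $|X|\geq k$ such that
\[
r|X| - r|Y| + \sum_{v \in Y} d_{G-X}(v) - h_G(X,Y) \leq rk - 1;
\]
choose such a pair with $|X|$ maximum and set $s = |X|$, $t = |Y|$. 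One may assume $d_{G-X}(v) \leq r$ for every $v \in Y$, since removing a higher-degree vertex only decreases the left-hand side. Arguments parallel to Claims~1--3 in the proof of Theorem~\ref{T1}, combined with $h_G(X,Y) \geq 0$, yield $s \geq k+1$ and $t \geq r+1$; the analog of Claim~3 becomes the weaker bound $s \leq t + k - 1 + \lfloor h_G(X,Y)/r \rfloor$, which in the sequel is refined to $s \leq t + k - 1$ using structural control.

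The rewiring steps go through with careful parity bookkeeping. Adding a missing edge inside $X$, or between $X$ and $V(G)\setminus X$, changes neither $\sum_{v\in Y} d_{G-X}(v)$ nor $h_G(X,Y)$, so the violation is preserved and Perron--Frobenius forces $G[X]$ to be a clique that dominates $V(G)\setminus X$. For an edge $uv \in G[Y]$, the rewiring $G' = G - uv + uw$, where $w \in V(G)\setminus(X \cup Y)$ is a suitable non-neighbor of $u$ chosen via the Perron vector, decreases $\sum_{v \in Y} d_{G-X}(v)$ by exactly one and flips only the parity of the component of $G - (X \cup Y)$ containing $w$, so $h_{G'}(X,Y) - h_G(X,Y) \in \{-1,+1\}$ and the left-hand side of the violation changes by $-1 \mp 1 \leq 0$. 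Hence $G'$ is still not $(r,k)$-critical while Lemma~\ref{le:5} gives $\lambda(G')>\lambda(G)$, contradicting maximality; therefore $G[Y]$ is independent. One then splits on $t$ exactly as in Theorem~\ref{T4}. In the range $r+1 \leq t < n/(r+k+2)$, Subcase~2.1 forces $t = r+1$ and $s = r+k$, which places $G$ in an enlargement of $\mathscr{F}_n^{r,k}$, and Lemma~\ref{le:11} gives $G \cong F_n^{r,k}$. In Subcase~2.2 the reduced graph $G^* = G - E_1 + E_2 + E_3 \cong K_{r+k} \vee (K_{n-2r-k-1} \cup (r+1)K_1)$ with $(X,Y) = (V(K_{r+k}), V((r+1)K_1))$ satisfies
\[
r|X| - r|Y| + \sum_{v \in Y} d_{G^*-X}(v) - h_{G^*}(X,Y) = rk - r - h_{G^*}(X,Y) \leq rk - r < rk,
\]
using $r \geq 2$, so $G^*$ is still not $(r,k)$-critical and $\lambda(G^*) > \lambda(G)$ contradicts maximality.

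The main obstacle is the large-$t$ case $t \geq n/(r+k+2)$ together with the weakened bound on $s$. In this regime the edge count becomes
\[
e(G) \leq rk - 1 + rt - rs + h_G(X,Y) + st + \binom{n-t}{2},
\]
and the naive estimate $h_G(X,Y) \leq n - s - t$ introduces an extra term of order $n$ under the radical of Lemma~\ref{le:7}, just large enough to break the desired inequality $\lambda(G) < n - r - 2 \leq \lambda(F_n^{r,k})$. To overcome this, I would use the extremality of $\lambda(G)$ to control the structure of $G[W]$ with $W := V(G)\setminus(X\cup Y)$: each component of $G[W]$ must be a clique (adding an edge inside a component preserves the component partition and the violation), and no edge can be added between two distinct components of $G[W]$ unless doing so merges two odd components into an even one and thereby converts $G$ into an $(r,k)$-critical graph. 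The refined combinatorial and parity analysis of these restricted configurations -- showing that the only surviving extremal structure has $h_G(X,Y) \leq 1$, so that the bound from Theorem~\ref{T4} applies essentially verbatim -- is the crux of the proof, and a secondary parity-aware refinement of Lemma~\ref{le:11} is needed in Subcase~2.1 to rule out the extra $Y$-to-$W$ edges that become permissible when $h_G(X,Y) > 0$.
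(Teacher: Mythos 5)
This statement is not proved in the paper: it is posed as an open conjecture, and the authors explicitly identify the term $h_G(X,Y)$ in Lemma~\ref{le:5.4} as the obstruction (``analyzing $(r,k)$-critical graphs is challenging due to their dependence on the number of components''). Your proposal correctly locates the same obstruction, but it does not overcome it: the large-$t$ case and the ``refined combinatorial and parity analysis'' showing $h_G(X,Y)\le 1$ are deferred and labelled as ``the crux of the proof,'' so what you have is a plan rather than a proof.

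Beyond the incompleteness, there is a concrete error at the very first step. You claim that arguments parallel to Claims~1--3 of Theorem~\ref{T1}, ``combined with $h_G(X,Y)\ge 0$,'' yield $s\ge k+1$ and $t\ge r+1$. But $h_G(X,Y)$ enters Lemma~\ref{le:5.4} with a minus sign, so the violated inequality rearranges to
\[
\sum_{v\in Y} d_{G-X}(v)\;\le\; rk-1-rs+rt+h_G(X,Y),
\]
and $h_G(X,Y)\ge 0$ \emph{weakens} this upper bound rather than strengthening it. Running Claim~1 with $s=k$ gives $rt\le \sum_{v\in Y} d_{G-X}(v)\le rt-1+h_G(X,Y)$, i.e.\ only $h_G(X,Y)\ge 1$ --- no contradiction; similarly the analogue of Claim~2 gives $(s-k)(t-r)\ge 1-h_G(X,Y)$, which is vacuous once $h_G(X,Y)\ge 1$. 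So none of $s\ge k+1$, $t\ge r+1$, $s\le t+k-1$ survives without first bounding $h_G(X,Y)$, which is exactly the part you have not supplied. A further unproved assertion is that one may assume $d_{G-X}(v)\le r$ for all $v\in Y$ because deleting a higher-degree vertex from $Y$ ``only decreases the left-hand side'': moving $v$ out of $Y$ returns it to $G-(X\cup Y)$, which can merge components and decrease $h_G(X,Y)$, hence \emph{increase} the left-hand side of the deficiency inequality. Until the component term is genuinely controlled, the argument of Theorem~\ref{T4} does not transfer, and the conjecture remains open.
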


\section*{Declarations}

The authors declare that they have no conflict of interest.

\section*{Data availability}

No data was used for the research described in the paper.











\end{document}